\documentclass[11pt,a4paper,reqno]{amsart}
\usepackage{a4wide}
\usepackage{amsfonts,amsmath,amssymb,amsthm,enumerate,color,hyperref}
\usepackage[latin1]{inputenc}
\usepackage[dvips,final]{graphicx}
\usepackage{natbib}

\newcommand{\eqdef}{=}
\newcommand{\eqsp}{\;}

  {\end{enumerate}}

\def\PE{\mathbb{E}}
\newcommand\CPE[2]{\PE\left[ \left. #1 \right| #2 \right]}
\def\PP{\mathbb{P}}
\newcommand\CPP[2]{\PP\left[ \left. #1 \right| #2 \right]}

\def\Var{\mathop{\rm Var}\nolimits}

\def\rmi{\textrm{i}}
\def\rme{\textrm{e}}
\def\trace{\textrm{Tr}}
\def\rem{R}
\def\fmin{f_{\min}}
\def\fmax{f_{\max}}
\newtheorem{theo}{Theorem}
\newtheorem{prop}{Proposition}
\newtheorem{lemma}{Lemma}

\theoremstyle{remark}
\newtheorem{remark}{Remark}

\def\rset{\mathbb{R}}
\def\zset{\mathbb{Z}}

\begin{document}

\title[Frequency estimation based on the cumulated Lomb-Scargle periodogram]
{Frequency estimation based on the cumulated Lomb-Scargle periodogram}
\author{C. L\'evy-Leduc}
\author{E. Moulines}
\author{F. Roueff}
\address{GET/Télécom Paris, CNRS LTCI, 46, rue Barrault, 75634 Paris Cédex 13, France.}
\email{[levyledu,moulines,roueff]@tsi.enst.fr}
\keywords
{Period estimation; Frequency estimation; Irregular sampling; Semiparametric estimation; Cumulated Lomb-Scargle
periodogram.}
\date{\today}











\maketitle

\vspace{-5mm}
\centerline{\textit{GET/Télécom Paris, CNRS LTCI}}

\begin{abstract}
We consider the problem of estimating the period of an unknown periodic function observed in additive noise sampled at
irregularly spaced
time instants in a semiparametric setting. To solve this problem, we propose a
novel estimator based on the cumulated Lomb-Scargle periodogram. We prove that this estimator is
consistent, asymptotically Gaussian and we provide an explicit expression
of the asymptotic variance.
Some Monte-Carlo experiments are then presented to support our claims.
\end{abstract}

\section{Introduction}
The problem of estimating the frequency of a periodic function corrupted by additive noise is ubiquitous and has
attracted a lot of research efforts in the last three decades. Up to now, most of these contributions have been devoted to  regularly sampled
observations; see e.g. \cite{quinn:hannan:2001} and the references therein.
In  many applications however, the observations are sampled at irregularly spaced time instants: examples occur in different fields,
including among others  biological rhythm research from free-living animals (\cite{ruf:1999}), unevenly spaced gene
expression time-series analysis (\cite{glynn:2006}),
or the analysis of brightness of periodic stars (\cite{hall:reimann:rice:2000,thiebaut:roques:2005}). In the latter case, for example, irregular observations come
from missing observations due to poor weather conditions (a star can be observed on most nights but not all nights),
and because of the variability of the observation times.
In the sequel, we consider the following model:
\begin{equation}\label{eq:Y}
Y_j=s_{\star}(X_j)+\varepsilon_j ,\quad j=1,2,\dots,n \;,
\end{equation}
where $s_\star$ is an unknown (real-valued) $T$--periodic function on the real line, $\{X_k\}$ are the sampling
instants and $\{ \varepsilon_k\}$ is an
additive noise. Our goal is to construct a consistent, rate optimal and easily
computable estimator of the frequency $f_0=1/T$ based on the
observations $\{ (X_i,Y_i) \}_{i=1,\dots,n}$ in a semiparametric
setting, where $s_{\star}$ belongs to some function space.
To our best knowledge, the only attempt to rigorously derive such semiparametric estimator is due to \citet*{hall:reimann:rice:2000}, who propose to use the
least-squares criterion defined by $S_n(f)=\sum_{k=1}^n \left(Y_k-\hat{s}(X_k|f)\right)^2$ where $\hat{s}(x|f)$ is a
nonparametric kernel estimator of $s_{\star}(x)$, adapted to a given frequency $f$, from the observations
$(X_j,Y_j),\;j=1,\dots,n$. For an appropriate choice of $\hat{s}(x|f)$,
the minimizer of $S_n(f)$ has been shown to
converge at the parametric rate and to achieve the optimal asymptotic variance, see~\cite{hall:reimann:rice:2000}.

Here, we propose to estimate the frequency by maximizing the Cumulated Lomb-Scargle Periodogram (CLSP), defined as
\begin{equation}\label{eq:CumPEr}
\Lambda_n(f)\eqdef\frac{1}{n^2}\sum_{k=1}^{K_n}\left|\sum_{j=1}^n Y_j \ \textrm{e}^{-2\textrm{i}k\pi f X_j}\right|^2 \; ,
\end{equation}
where $K_n$ denotes the number of cumulated harmonics, assumed to be slowly increasing with $n$.
Considering such an estimator is very natural since this procedure might be seen as an adaptation of the algorithm
proposed by \citet*{quinn:thomson:1991} obtained
by replacing the periodogram by the Lomb-Scargle periodogram, introduced in \cite{lomb:1976} (see also
\cite{scargle:1982}) to account for irregular sampling time instants. Note also that such an estimator can
be easily implemented and efficiently computed using \cite[p.~581]{numericalrecipes}.
We will show that the estimator based on the maximization of the
cumulated Lomb-Scargle periodogram $\Lambda_n(f)$
is consistent, rate optimal and asymptotically Gaussian.
It is known that frequency estimators based on the cumulated periodogram are
optimal in terms of rate and asymptotic variance in the cases of continuous time observations
and regular sampling (see \cite{golubev:1988,quinn:thomson:1991,gassiat:levyleduc:2006}).
We will see that,
somewhat surprisingly, at least under renewal assumptions on the observation times,
the asymptotic variance is no longer optimal in the irregular sampling case investigated here.
However, because of its numerical simplicity, we believe that the CLSP estimator is a
sensible  estimator, which may be used as a starting value of more sophisticated and computationally intensive
techniques, see e.g. ~\citet*{hall:reimann:rice:2000}. The numerical experiments that we have conducted clearly support these
findings.

The paper is organized as follows. In Section~\ref{results}, we state our main results (consistency and asymptotic normality) and provides sketches of the proofs.
In Section~\ref{implementation},
we present some numerical experiments to compare
the performances of our estimator with the estimator of \citet*{hall:reimann:rice:2000}.
In Section~\ref{sec:proofs}, we provide some auxiliary results and we detail the steps that are omitted in the proof
sketches of the main results.

\section{Main results}\label{results}
Define the Fourier coefficients of a locally integrable $T$-periodic function $s$ by
\begin{equation}\label{fourierCoeff}
c_k(s) \eqdef \frac{1}{T} \int_0^{T} s(t) \, \rme^{-2\rmi k\pi t/T} dt,\quad k\in\zset
\quad\text{so that}\quad
s(t)=\sum_{p\in\zset} c_p(s)\rme^{2\rmi\pi p t/T} \; ,
\end{equation}
when this expansion is well defined.
Recall that the frequency $f_0=1/T$ of $s_{\star}$ is
here the parameter of interest. Consider
the least-squares criterion based on observations $\{(X_i,Y_i) \}_{i=1,\dots,n}$,
\begin{eqnarray}\label{logvrais}
L_n(f,  \mathbf{c} )\eqdef\sum_{j=1}^n \left(Y_j - \sum_{k=-K_n}^{K_n} c_k \ \textrm{e}^{2\textrm{i}k\pi f X_j}  \right)^2 \eqsp , \quad {\mathbf{c}} \eqdef [c_{-K_n}, \dots, c_{K_n}]^T
\end{eqnarray}
where $\{ K_n \}$ is the number of harmonics. For a given frequency $f$, the coefficients $\tilde{\mathbf{c}}_n(f)= [\tilde{c}_{-K_n}, \dots, \tilde{c}_{K_n}]$ which minimize~(\ref{logvrais})
solve the system of equations $G_n(f) \tilde{\mathbf{c}}_n(f)=  n \hat{\mathbf{c}}_n(f)$, where
the (Gram) matrix $G_n(f) \eqdef [ G_{n,k,l}(f) ]_{-K_n \leq k,l \leq K_n}$ and the vector
$\hat{\mathbf{c}}_n(f)= [\hat{c}_{-K_n}(f), \dots, \hat{c}_{K_n}(f)]$ are defined by:
\begin{equation}
\label{eq:cchap}
G_{n,k,l}(f) \eqdef \sum_{j=1}^n \rme^{-2 \rmi (k-l) \pi f X_j} \quad \text{and} \quad \hat{c}_l(f)\eqdef\frac{1}{n}\sum_{j=1}^n Y_j \textrm{e}^{-2\textrm{i}l\pi f X_j} \;.
\end{equation}
An estimator for the frequency $f_0$  can then be obtained by minimizing the residual sum of squares
\begin{equation}
\label{eq:ResidualSumofSquares}
f \mapsto L_n(f,\{\tilde{c}_k(f)\}) \eqdef \sum_{j=1}^n Y_j^2 - n^2\mathbf{\hat{c}}^T_n(f) G^{-1}_n(f) \mathbf{\hat{c}}_n(f) \eqsp.
\end{equation}
Note that computing $\tilde{\mathbf{c}}_n(f)$ is numerically cumbersome when $K_n$ is large since it requires to solve  a system of
$2K_n+1$ equations for each value of the frequency $f$ where the function $L_n(f,\{\tilde{c}_k(f)\})$ should be evaluated.
In many cases (including the renewal case investigated below, see Lemmas~\ref{moments} and~\ref{lem:deviation}),
we can prove that, as $n$ goes to infinity, if the number of harmonics $K_n$ grows slowly enough (say, at a logarithmic rate),
the Gram matrix $G_n(f)$ is approximately  $G_n(f)  \approx n \mathrm{Id}_{2 K_n+1}$,
where $\mathrm{Id}_p$ denotes the $p \times p$ identity matrix;
this suggests to approximate $L_n(f,\{\tilde{c}_k(f)\})$ by
$f \mapsto \sum_{j=1}^n Y_j^2 -n\sum_{|k|\leq K_n}|\hat{c}_k(f)|^2$.
The minimization of this quantity is equivalent to maximizing the cumulated periodogram $\Lambda_n$
defined by~(\ref{eq:CumPEr}).

That is why we propose to estimate $f_0$ by $\hat{f}_n$ defined as follows,
\begin{equation}\label{eq:ftilden}
\Lambda_n(\hat{f}_n)=\sup_{f\in[\fmin,\fmax]}\Lambda_n(f)\; ,
\end{equation}
where $[\fmin,\fmax]$ is a given interval included in $(0,\infty)$.
Consider the following assumptions.
\begin{enumerate}[(H1)]
\item \label{assum:periodic-function} $s_{\star}$ is a real-valued periodic function defined
  on $\rset$ with finite fundamental frequency $f_{\star}$.
\item \label{assum:random-design} $\{X_j \}$ are the observation time instants, modeled as a renewal process, that is,
$X_j=\sum_{k=1}^j V_k $,
where $\{ V_k \}$ is a an i.i.d sequence of non-negative random variables with finite mean. In addition, for all
$\epsilon>0$, $\sup_{|t|\geq\epsilon} |\Phi(t)|<1$, where $\Phi$ denotes the
characteristic function of $V_1$,
\begin{equation}\label{eq:Phi}
\Phi(t)\eqdef \PE[\exp(\rmi t V_1)] \eqsp.
\end{equation}
\item \label{assum:distribution-noise} $\{ \varepsilon_j \}$ are i.i.d. zero mean Gaussian random variables with (unknown)
variance $\sigma_{\star}^2 > 0$ and are independent from the random variables $\{X_j\}$.
\item \label{assum:strong-spread-out} The distribution of $V_1$ has a non-zero absolutely continuous part with
respect to the Lebesgue measure.
\end{enumerate}
Recall that in (H\ref{assum:periodic-function}) the fundamental frequency is uniquely defined 
for non constant functions as follows: $T_{\star}=1/f_{\star}$ is the smallest $T>0$ such that
$s_{\star}(t+T)=s_{\star}(t)$ for all $t$.
All the possible frequencies of $s_{\star}$ are then $f_{\star}/l$,
where $l$ is a positive integer.
Note that the assumption made on the distribution of $V_1$ in~(H\ref{assum:random-design})
is a Cramer's type condition, which is weaker than~(H\ref{assum:strong-spread-out}).

The following result shows that $\hat{f}_n$ is a consistent estimator of the frequency contained by $[\fmin,\fmax]$
under very mild assumptions and give
some preliminary rates of convergence. These rates will be improved in Theorem~\ref{theo:mainCLT} under more
restrictive assumptions.

\begin{theo}\label{theo:mainCons}
Assume (H\ref{assum:periodic-function})--(H\ref{assum:distribution-noise}).
Let $\{K_n \}$ be a sequence of positive integers tending to infinity such that
\begin{equation}\label{eq:KnCondCons}
\lim_{n\to +\infty}K_n \left\{\rem(n^{\beta}) + n^{-1/2+\beta}\right\}=0\quad\text{for some $\beta>0$} \; ,
\end{equation}
where
\begin{equation}\label{eq:reste}
\rem(m)\eqdef \sum_{|k|>m} |c_k(s_{\star})| ,\quad m\geq0 \;.
\end{equation}
Let $\hat{f}_n$ be defined by~(\ref{eq:ftilden}) with $0<\fmin<\fmax$ such that $f_{0}$ is the unique
number $f\in[\fmin,\fmax]$ for which $s_{\star}$ is $1/f$--periodic. Then, for any $\alpha>0$,
\begin{equation}\label{eq:weakCons}
\hat{f}_n = f_{0} + o_{p}(n^{-1+\alpha})  \; .
\end{equation}
If we assume in addition that $\PE(V_1^2)$ is finite, then
\begin{equation}\label{eq:ConvPS}
n(\hat{f}_n - f_{0}) \to 0 \quad a.s.
\end{equation}
\end{theo}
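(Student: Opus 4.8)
The strategy is to analyse the criterion through its ``signal plus noise'' decomposition and to reduce both assertions to a uniform comparison of $\Lambda_n(f)$ with $\Lambda_n(f_0)$ when $f$ ranges over frequencies at a prescribed distance from $f_0$. Since the supremum in~\eqref{eq:ftilden} is attained, \eqref{eq:weakCons} follows as soon as, for every $\alpha>0$,
\begin{equation*}
\PP\Bigl(\sup_{f\in A_n}\Lambda_n(f)\ge \Lambda_n(f_0)\Bigr)\longrightarrow 0,\qquad A_n\eqdef\{f\in[\fmin,\fmax]:\ |f-f_0|\ge n^{-1+\alpha}\},
\end{equation*}
and \eqref{eq:ConvPS} follows as soon as, for every $\epsilon>0$, the analogous event with $A_n$ replaced by $\{f\in[\fmin,\fmax]:\ |f-f_0|\ge \epsilon/n\}$ occurs only finitely often, almost surely. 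Plugging $Y_j=s_{\star}(X_j)+\varepsilon_j$ and the Fourier expansion~\eqref{fourierCoeff} of $s_{\star}$ into~\eqref{eq:cchap}, and writing $D_n(\theta)\eqdef n^{-1}\sum_{j=1}^n\rme^{2\rmi\pi\theta X_j}$, one obtains
\begin{equation*}
\hat c_k(f)=\sum_{p\in\zset}c_p(s_{\star})\,D_n\!\left(p f_0-k f\right)+\frac1n\sum_{j=1}^n\varepsilon_j\,\rme^{-2\rmi\pi k f X_j}\eqsp,
\end{equation*}
so that $\Lambda_n(f)=\sum_{k=1}^{K_n}|\hat c_k(f)|^2$ is entirely governed by the behaviour of $D_n$ and of the empirical noise coefficients, uniformly over $1\le k\le K_n$ and $f\in[\fmin,\fmax]$.

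The core of the argument is the control of these two random objects. For $D_n$, the renewal structure in~(H\ref{assum:random-design}) gives $\PE[D_n(\theta)]=n^{-1}\sum_{j=1}^n\Phi(2\pi\theta)^j$, which equals $1$ at $\theta=0$ and, by the Cram\'er condition in~(H\ref{assum:random-design}) together with the finiteness of $\PE[V_1]$, satisfies $|\PE[D_n(\theta)]|\le C\,(n\,(|\theta|\wedge 1))^{-1}\wedge 1$ for a constant $C$; the fluctuation $D_n(\theta)-\PE[D_n(\theta)]$ is, by a second moment bound and a discretisation in $\theta$ (this is where the renewal estimates of Lemmas~\ref{moments} and~\ref{lem:deviation} enter), of size $O(n^{-1/2+\beta})$ uniformly over the range of $\theta$ generated by $|p|\le n^{\beta}$, $1\le k\le K_n$, $f\in[\fmin,\fmax]$, while the part of the $p$-sum with $|p|>n^\beta$ is bounded by $\rem(n^\beta)$ because $|D_n|\le 1$. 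For the noise part, conditionally on $\{X_j\}$ these coefficients are zero-mean complex Gaussian with variance $\sigma_{\star}^2/n$ by~(H\ref{assum:distribution-noise}), so that a covering of $[\fmin,\fmax]$ by a grid of mesh $n^{-C}$ (which tames the $O(n)$ Lipschitz constant of $f\mapsto\rme^{-2\rmi\pi kfX_j}$, using that $X_n=O(n)$ a.s.) and a union bound over $|k|\le K_n$ yield $\sup_{f}\max_{|k|\le K_n}\bigl|n^{-1}\sum_j\varepsilon_j\rme^{-2\rmi\pi kfX_j}\bigr|=O_p\bigl(\sqrt{\log(nK_n)}\,n^{-1/2}\bigr)$. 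Squaring and summing over $k\le K_n$, condition~\eqref{eq:KnCondCons}---designed precisely so that $K_n\rem(n^\beta)\to0$ and $K_n n^{-1/2+\beta}\to0$---makes all truncation and random errors negligible. In particular $\Lambda_n(f_0)\to L_0\eqdef\sum_{k\ge1}|c_k(s_{\star})|^2$, which is positive since $s_{\star}$ is non-constant by~(H\ref{assum:periodic-function}). Moreover, a pointwise analysis of $\hat c_k(f)$ shows that, if $f/f_0=a/b$ in lowest terms, then $\Lambda_n(f)$ concentrates around $K(f)\eqdef\sum_{m\ge1}|c_{ma}(s_{\star})|^2$ (and $K(f)=0$ if $f/f_0\notin\mathbb{Q}$); since ``$c_m(s_{\star})=0$ for every $m$ not divisible by $a$'' is equivalent to ``$s_{\star}$ is $1/f$-periodic'', the uniqueness requirement on $[\fmin,\fmax]$ gives $K(f)<L_0$ for every $f\in[\fmin,\fmax]\setminus\{f_0\}$, and, since the $n^{-1}$-wide peaks of $\Lambda_n$ around the resonant frequencies do not reach $f_0$, one obtains $\limsup_n\sup\{\Lambda_n(f):f\in[\fmin,\fmax],\ |f-f_0|\ge\delta_0\}<L_0$ in probability for every $\delta_0>0$; this already yields $\hat f_n\to f_0$ in probability.

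To upgrade consistency to the rate~\eqref{eq:weakCons}, fix $\delta_0>0$ small. On $\{f\in[\fmin,\fmax]:|f-f_0|\ge\delta_0\}$ the preceding step yields $\sup\Lambda_n(f)<\Lambda_n(f_0)$ with probability tending to one. On the remaining range write $f=f_0+\delta$ with $n|\delta|\ge n^\alpha\to\infty$ and $|\delta|<\delta_0$: the diagonal ($p=k$) part of $\hat c_k(f)$ is $c_k(s_{\star})\PE[D_n(k\delta)]$ up to the errors above, and since $|\PE[D_n(k\delta)]|\le C(n|\delta|)^{-1}\to 0$ uniformly in $k\ge1$, the total diagonal contribution $\sum_{k\le K_n}|c_k(s_{\star})|^2|\PE[D_n(k\delta)]|^2$ is $o(1)$; as for the off-diagonal ($p\ne k$) part, when $|\delta|<\delta_0$ it can be non-negligible only for harmonics $k$ of order at least $f_0/\delta_0$, it then involves Fourier coefficients $c_p(s_{\star})$ with $|p|$ of order at least $f_0/\delta_0$, and, since the index ranges attached to distinct resonances are essentially disjoint, its total contribution is at most $\sum_{|p|\ge f_0/(2\delta_0)}|c_p(s_{\star})|^2+o_p(1)$, which is small when $\delta_0$ is small. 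Hence on $\{n^{-1+\alpha}\le|f-f_0|<\delta_0\}$ one gets $\Lambda_n(f)\le\kappa(\delta_0)+o_p(1)$ with $\kappa(\delta_0)\to0$ as $\delta_0\to0$, so that choosing $\delta_0$ small and then $n$ large gives $\sup_{f\in A_n}\Lambda_n(f)<\Lambda_n(f_0)$ with probability tending to one, which is~\eqref{eq:weakCons}. The step I expect to be the main obstacle is precisely this simultaneous \emph{uniform} control---over the growing harmonic index $k\le K_n$ and over $f\in[\fmin,\fmax]$---of the renewal sums $D_n$ and of the Gaussian noise coefficients, together with the bookkeeping of the near-resonant frequencies $f\approx(p/k)f_0$; it is this that dictates the quantitative coupling~\eqref{eq:KnCondCons} between $K_n$ and the tail $\rem(\cdot)$.

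Finally, \eqref{eq:ConvPS} is obtained by turning the previous inequalities into almost sure ones. The Gaussian part has exponential deviation bounds, and when $\PE[V_1^2]<\infty$ the renewal fluctuations can be given polynomial tails via Markov's inequality, so that the events $\{|\hat f_{n_\ell}-f_0|>\epsilon/(2n_\ell)\}$ are summable along a geometric subsequence $n_\ell=\lfloor\rho^\ell\rfloor$; here one also uses that $\PE[V_1^2]<\infty$ makes $\Phi$ twice differentiable at $0$, which ensures that along any subsequence where $n|\delta|$ converges (the only case left uncovered by the damping argument above being $n|\delta|=O(1)$), $\PE[D_n(k\delta)]$ converges, for each $k\ge1$, to a limit of modulus strictly less than $1$, so that the leading harmonics of $\Lambda_n(f_0+\delta)$ incur a strictly positive (though $\epsilon$-dependent) loss compared with those of $\Lambda_n(f_0)$. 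A Borel--Cantelli argument then gives $|\hat f_{n_\ell}-f_0|\le\epsilon/(2n_\ell)$ eventually a.s., and controlling the oscillation of $\Lambda_n$ over $n_{\ell-1}<n\le n_\ell$ (again via $\PE[V_1^2]<\infty$) transfers this bound to every $n$; since $\epsilon>0$ is arbitrary, $n(\hat f_n-f_0)\to0$ a.s.
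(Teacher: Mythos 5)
For \eqref{eq:weakCons} your route is essentially the paper's: the same signal--plus--noise decomposition of $\Lambda_n$, control of the empirical characteristic function $\varphi_{n,X}$ (your $D_n$) through its mean (Cram\'er condition, Lemma~\ref{moments}) and its fluctuations (Lemma~\ref{lem:deviation} plus a frequency grid), a conditional-Gaussian bound for the noise coefficients (your grid-plus-union-bound is a legitimate, simpler substitute for the chaining via Lemma~\ref{golubev:1988} used inside Lemma~\ref{eta}, since only the zeroth derivative is needed here), identification of the resonance plateaux $\sum_m|c_{ma}(s_\star)|^2$ and their strict deficit with respect to $\sum_{k\ge1}|c_k(s_\star)|^2$ under the uniqueness hypothesis --- this is exactly the role of Lemmas~\ref{outballs}--\ref{inballs} and of inequality \eqref{eq:S1} in the paper. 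Your bookkeeping of the zones ($|f-f_0|\ge\delta_0$ versus $n^{-1+\alpha}\le|f-f_0|<\delta_0$) differs superficially from the paper's balls $B_n(j,l)$ around all resonances, but the content is the same, and you correctly flag the uniformity over $k\le K_n$ and $f$ as the technical core; as a sketch this half is sound.

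The genuine gap is in the second assertion \eqref{eq:ConvPS}, in the critical zone $\epsilon/n\le|\hat f_n-f_0|\le n^{-1+\alpha}$. There the quantity that must be shown to be bounded away from $1$ is $|\varphi_{n,X}(2\pi k(f_0-\hat f_n))|^2$ itself, not its expectation: your argument derives the ``strict loss'' from the convergence of $\PE[D_n(k\delta)]$ to a limit of modulus $<1$, but none of the tools you have put in place controls the fluctuation $D_n-\PE[D_n]$ at frequencies of order $1/n$ --- the exponent in the martingale deviation bound \eqref{eq:ExpDev} is proportional to $nx^2|1-\Phi(t)|\asymp nx^2|t|=O(x^2)$ when $|t|\asymp 1/n$, and the second-moment bound \eqref{eq:phinxDerkBounds2} only gives $O(1)$ there. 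The missing ingredient is precisely the paper's comparison \eqref{eq:Fejer}, $\sup_{|t|\le n^{-1/2-\epsilon}}\bigl||\varphi_{n,X}(t)|^2-n^{-1}F_n(\mu t)\bigr|=o_p(1)$, obtained from the coupling $\PE|X_j-j\mu|\le\sqrt{j\,\Var(V_1)}$ (this, rather than twice-differentiability of $\Phi$, is where $\PE[V_1^2]<\infty$ is used), combined with $\limsup_n\sup_{|t|>c/n}n^{-1}F_n(t)<1$; this yields the loss \emph{uniformly} over the random deviation $\delta=\hat f_n-f_0$, whereas a fixed-$k$, per-subsequence statement about $\PE[D_n(k\delta)]$ does not. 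In addition, your Borel--Cantelli upgrade to almost sure convergence would require summable (polynomially decaying) tail bounds for all the uniform events of the first part and a block-wise control of the argmax between $n_{\ell-1}$ and $n_\ell$, none of which is derived; the paper instead concludes by the Quinn--Thomson scheme once \eqref{eq:Fejer} is available. These defects are repairable along the paper's lines, but as written the proof of \eqref{eq:ConvPS} is incomplete at exactly this step.
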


\begin{proof}[Proof (sketch)]
Since $f_{\star}$ is the fundamental frequency of $s_\star$, the assumption on $[\fmin,\fmax]$ is equivalent to saying 
that there
exists a unique positive integer $\ell$ such that
\begin{equation}
  \label{eq:FminFmax}
\fmin \leq f_{\star}/\ell \leq \fmax \;,
\end{equation}
and that $f_{\star}/\ell=f_{0}$.
Using~(\ref{eq:Y}), we split $\Lambda_n$ defined in~(\ref{eq:CumPEr}) into three terms:
$\Lambda_n(f)=D_n(f)+\xi_n(f)+\zeta_n(f)$ where
\begin{align}
\label{eq:DnDef}
&D_n(f)=\frac{1}{n^2}\sum_{k=1}^{K_n}\left|\sum_{j=1}^n s_{\star}(X_j) \ \textrm{e}^{-2\textrm{i}k\pi f X_j}\right|^2 \; ,\\
\label{eq:XinDef}
&\xi_n(f)=\frac{2}{n^2}\sum_{k=1}^{K_n}\sum_{j,j'=1}^n\cos\{2\pi k f(X_j-X_{j'})\}\,s_{\star}(X_j) \varepsilon_{j'} \; ,\\
\label{eq:zetaDef}
&\zeta_n(f)=\frac{1}{n^2}\sum_{k=1}^{K_n}\left|\sum_{j=1}^n \varepsilon_j \ \textrm{e}^{-2\textrm{i}k\pi f X_j}\right|^2 \; .
\end{align}
We prove in Lemma~\ref{eta} of Section \ref{sec:proofs} that $\xi_n+\zeta_n$ tends uniformly to
zero in probability as $n$ tends to infinity. Then, by Lemmas~\ref{outballs} and~\ref{inballs}
proved in Section~\ref{sec:usef-interm-results}, for any $\alpha$, $D_n$ is maximal in balls centered at
sub-multiples of $f_{\star}$ with radii of order $n^{-1+\alpha}$ with probability tending to 1. Since,
by~(\ref{eq:FminFmax}), the interval $[\fmin,\fmax]$ contains but the sub-multiple $f_{\star}/\ell=f_{0}$, $\hat{f}_n$
satisfies~(\ref{eq:weakCons}). This line of
reasoning is detailed in Section~\ref{sec:proof-eqrefWeakCons}.
The obtained rate is then refined in \eqref{eq:ConvPS}
by adapting the consistency proof of~\citet*{quinn:thomson:1991} to our random design context (see Section~\ref{sec:proof-equat-eqrefConvPS}).
\end{proof}

\begin{remark}
One can construct a sequence $\{K_n \}$ satisfying Condition~(\ref{eq:KnCondCons}), as soon as
\begin{eqnarray}\label{eq:Wiener}
\sum_{k\in\zset} |c_k(s_{\star})|<+\infty \; ,
\end{eqnarray}
which is a very mild assumption.
\end{remark}
\begin{remark}\label{rem:consistencyfstar}
Observe that $f_0=f_{\star}$ in Theorem~\ref{theo:mainCons} if and only if $f_{\star}/2< \fmin\leq f_{\star} \leq \fmax$.
\end{remark}
\begin{remark}\label{rem:consistency}
If $\fmin$ and $\fmax$ are such that $s_\star$ has several frequencies in $[\fmin,\fmax]$, that is,~(\ref{eq:FminFmax}) has
multiple solutions $\ell=l_{\min},\dots,l_{\max}$, by partitioning
$[\fmin,\fmax]$ conveniently, we get instead of~(\ref{eq:weakCons}) (resp.~(\ref{eq:ConvPS})) that there exists a
random sequence $(\ell_n)$ with values in $\{l_{\min},\dots,l_{\max}\}$ such that $\hat{f}_n = f_{\star}/\ell_n + o_{p}(n^{-1+\alpha})$ (resp.
$n(\hat{f}_n - f_{\star}/\ell_n) \to 0$). This is not specific to our estimator. Unless an appropriate procedure is used to
select the largest
\emph{plausible} frequency, any  standard frequency estimators will in fact converge to a set of sub-multiples of $f_{\star}$.
\end{remark}

We now derive a Central Limit Theorem which holds for our estimator when
Condition~(\ref{eq:KnCondCons}) is strengthened into~(\ref{eq:sobolev})
and~(\ref{eq:KnCond}) and a finite fourth moment is assumed on $V_1$. 

\begin{theo}\label{theo:mainCLT}
Assume (H\ref{assum:periodic-function})--(H\ref{assum:strong-spread-out}).
Assume in addition that $\PE[V_1^4]$ is finite and that $s_{\star}$ satisfies
\begin{eqnarray}\label{eq:sobolev}
\sum_{k\in\zset} |k|^3 |c_k(s_{\star})|<+\infty \;.
\end{eqnarray}
Let $\{K_n\}$ be a sequence of positive integers tending to infinity such that
\begin{equation}\label{eq:KnCond}
\lim_{n\to +\infty} K_nn^{-\epsilon}=0\quad\text{for all}\quad \epsilon>0 \; .
\end{equation}
Let $\hat{f}_n$ be defined by~(\ref{eq:ftilden}) with $0<\fmin<\fmax$ such that $f_{0}$ is the unique
number $f\in[\fmin,\fmax]$ for which $s_{\star}$ is $1/f$--periodic.
Then we have the following asymptotic linearization:
\begin{equation}\label{eq:LinearizationHatf}
n^{3/2}(\hat{f}_n-f_{0})=\frac{\mu\;f_{0}}{n^{3/2}\sigma_{\star}^2 f_{\star}^2I_{\star}}\sum_{j=1}^n
\left(j-\frac{n}{2}\right)\dot{s}_{\star}(X_j)\left(\varepsilon_j+s_{\star}(X_j)\right) + o_{p}(1)\;,
\end{equation}
where $\mu=\PE(V_1)$ and
\begin{equation}\label{eq:Istar}
I_{\star}=\frac{\mu^2}{12\sigma_{\star}^2
  f_{\star}}\int_0^{1/f_{\star}} \dot{s}_{\star}^2 (t) dt\; .
\end{equation}
Moreover $\hat{f}_n$ satisfies the following Central Limit Theorem
\begin{eqnarray}\label{CLT}
n^{3/2}(\hat{f}_n-f_{0})\frac{f_\star}{f_0}\stackrel{\mathcal{L}}{\longrightarrow}\mathcal{N}(0,\check{\sigma}^2),
\end{eqnarray}
where
\begin{equation}\label{eq:asymVar}
\check{\sigma}^2=I_{\star}^{-1}
\left\{1+\frac{\displaystyle\sum_{k\neq 0} |c_k(s_{\star}\dot{s}_{\star})|^2
\left(\frac{1-|\Phi(2k\pi f_{\star})|^2}{|1-\Phi(2k\pi f_{\star})|^2}\right)}
{\sigma_{\star}^2\sum_{k\in\zset}|c_k(\dot{s}_{\star})|^2}\right\},
\end{equation}
where $\Phi$, defined in~(\ref{eq:Phi}), denotes the characteristic function of $V_1$ and $\{c_k(s),k\in\zset\}$ denote the
Fourier coefficients of a $1/f_{\star}$-periodic function $s$ as defined in~(\ref{fourierCoeff}) with $T=1/f_{\star}$.
\end{theo}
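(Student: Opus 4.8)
The plan is to follow the classical route for $M$-estimators: localise around $f_{0}$, use the first order condition $\Lambda_n'(\hat f_n)=0$, Taylor-expand the score $\Lambda_n'$ about $f_{0}$, identify its asymptotically linear form, and conclude with a central limit theorem for the resulting linear statistic; throughout one works with the decomposition $\Lambda_n=D_n+\xi_n+\zeta_n$ of~(\ref{eq:DnDef})--(\ref{eq:zetaDef}). By Theorem~\ref{theo:mainCons} (whose hypotheses are implied by those assumed here, $f_{0}$ being interior to $[\fmin,\fmax]$), with probability tending to one $\hat f_n$ lies in a shrinking interval $I_n$ around $f_{0}$ contained in the interior of $[\fmin,\fmax]$, so that $\Lambda_n'(\hat f_n)=0$. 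Under~(\ref{eq:sobolev}) and $\PE[V_1^4]<\infty$, $\Lambda_n$ is twice continuously differentiable, and, differentiating $D_n,\xi_n,\zeta_n$ term by term, one would control $\Lambda_n'$ and $\Lambda_n''$ uniformly over $I_n$ by the same renewal / law-of-large-numbers arguments already used for Theorem~\ref{theo:mainCons}, Cram\'er's condition~(H\ref{assum:random-design}) serving to tame the oscillating sums $\sum_j\rme^{2\rmi\pi u X_j}$ ($u\neq0$). A Taylor expansion gives $0=\Lambda_n'(f_{0})+(\hat f_n-f_{0})\Lambda_n''(\bar f_n)$ for some $\bar f_n\in I_n$, and one would show, uniformly over $I_n$,
\[
n^{-2}\Lambda_n''(\bar f_n)\stackrel{\mathbb{P}}{\longrightarrow}-\kappa,\qquad \kappa=\frac{\sigma_{\star}^2 f_{\star}^2 I_{\star}}{f_{0}^2}=\frac{\mu^2 f_{\star}}{12\,f_{0}^2}\int_0^{1/f_{\star}}\dot{s}_{\star}^2(t)\,dt ,
\]
the numerical constants being produced by laws of large numbers for the renewal functionals entering $D_n''(f_{0})$ (such as $n^{-3}\sum_j X_j^2\to\mu^2/3$) together with Parseval's identity, while $\xi_n''$ and $\zeta_n''$ contribute $o_p(n^2)$.

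The next step is the asymptotic linearisation of the score. First $\zeta_n'(f_{0})=o_p(n^{1/2})$ (a noise$\times$noise term; the summation over $k\le K_n$ costs only a power of $K_n$, hence $o(n^{\epsilon})$ by~(\ref{eq:KnCond})). For $D_n'(f_{0})$ and $\xi_n'(f_{0})$, write $J_k(f)=\sum_j s_{\star}(X_j)\rme^{-2\rmi k\pi f X_j}$, $J_k'(f)=\frac{d}{df}J_k(f)$, let $g_k(t)=s_{\star}(t)\rme^{-2\rmi k\pi f_{0}t}$ (which is $1/f_{0}$-periodic with mean $c_k$, the Fourier coefficient of $s_{\star}$ for the period $1/f_{0}$, vanishing off the multiples of $\ell$), and set $\bar X_n=n^{-1}\sum_j X_j$. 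The key exact identity, at $f=f_{0}$, is
\[
2\,\mathrm{Re}\big[\overline{J_k(f_{0})}\,J_k'(f_{0})\big]=-4k\pi\,\mathrm{Re}\big[\rmi\,\overline{J_k(f_{0})}\textstyle\sum_{j=1}^n(X_j-\bar X_n)\big(g_k(X_j)-c_k\big)\big] ,
\]
since the part of $J_k'(f_{0})$ proportional to $\overline{J_k(f_{0})}$ contributes nothing to the real part. Replacing $\overline{J_k(f_{0})}$ by its leading value $n\,\overline{c_k}$ and $X_j-\bar X_n$ by $\mu(j-n/2)$ (the errors being $o_p(n^{1/2})$ after summation over $j$ and $k$, by~(\ref{eq:sobolev}), (H\ref{assum:random-design}) and (H\ref{assum:strong-spread-out})), handling the signal$\times$noise part of $\xi_n'$ in the same way, and resumming the harmonics via the Fourier identity $\sum_{k\ge1}k\,\mathrm{Im}[c_k\rme^{2\rmi k\pi f_{0}t}]=-\dot{s}_{\star}(t)/(4\pi f_{0})$ (valid by~(\ref{eq:sobolev}); the truncation at $K_n$ is harmless since $\sum_{|k|>K_n}|c_k|\to0$), one would arrive at
\[
\Lambda_n'(f_{0})=\frac{\mu}{n f_{0}}\sum_{j=1}^n\Big(j-\frac n2\Big)\dot{s}_{\star}(X_j)\big(\varepsilon_j+s_{\star}(X_j)\big)+o_p(n^{1/2}) ,
\]
which, combined with the curvature limit, yields~(\ref{eq:LinearizationHatf}) (one checks $\mu/(\kappa f_{0})=\mu f_{0}/(\sigma_{\star}^2 f_{\star}^2 I_{\star})$).

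It then remains to prove a central limit theorem for $Z_n=n^{-3/2}\sum_j(j-n/2)\dot{s}_{\star}(X_j)(\varepsilon_j+s_{\star}(X_j))=Z_n^{\varepsilon}+Z_n^{s}$. Conditionally on $\{X_j\}$, $Z_n^{\varepsilon}$ is centred Gaussian with variance $\sigma_{\star}^2 n^{-3}\sum_j(j-n/2)^2\dot{s}_{\star}^2(X_j)\to\frac{\sigma_{\star}^2}{12}f_{\star}\int_0^{1/f_{\star}}\dot{s}_{\star}^2=\frac{\sigma_{\star}^2}{12}\sum_k|c_k(\dot{s}_{\star})|^2$, using $n^{-3}\sum_j(j-n/2)^2\to1/12$. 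Expanding the $1/f_{\star}$-periodic function $s_{\star}\dot{s}_{\star}=\tfrac12(s_{\star}^2)'$ in Fourier series gives $Z_n^{s}=n^{-3/2}\sum_{k\neq0}c_k(s_{\star}\dot{s}_{\star})\sum_j(j-n/2)\rme^{2\rmi k\pi f_{\star}X_j}$; for each fixed $k\neq0$, $\PE[\rme^{2\rmi k\pi f_{\star}(X_{j'}-X_j)}]=\Phi(2k\pi f_{\star})^{j'-j}$ for $j'\ge j$, so summing the geometric series and using the elementary identity $1+2\,\mathrm{Re}\,\tfrac{\Phi}{1-\Phi}=\tfrac{1-|\Phi|^2}{|1-\Phi|^2}$, the quantity $n^{-3/2}\sum_j(j-n/2)\rme^{2\rmi k\pi f_{\star}X_j}$ is asymptotically centred Gaussian with variance $\tfrac1{12}\tfrac{1-|\Phi(2k\pi f_{\star})|^2}{|1-\Phi(2k\pi f_{\star})|^2}$, the contributions of distinct $k$ being asymptotically uncorrelated and the tail in $k$ controlled by~(\ref{eq:sobolev}); moreover $Z_n^{\varepsilon}$ and $Z_n^{s}$ are asymptotically independent because $\{\varepsilon_j\}$ is independent of $\{X_j\}$. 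A joint central limit theorem (Cram\'er--Wold together with a martingale / mixing central limit theorem for functionals of the renewal process) then gives $Z_n\stackrel{\mathcal{L}}{\longrightarrow}\mathcal{N}(0,v)$ with $v=\frac{\sigma_{\star}^2}{12}\sum_k|c_k(\dot{s}_{\star})|^2+\frac1{12}\sum_{k\neq0}|c_k(s_{\star}\dot{s}_{\star})|^2\frac{1-|\Phi(2k\pi f_{\star})|^2}{|1-\Phi(2k\pi f_{\star})|^2}$. Since~(\ref{eq:LinearizationHatf}) reads $n^{3/2}(\hat f_n-f_{0})f_{\star}/f_{0}=(\mu/(\sigma_{\star}^2 f_{\star}I_{\star}))\,Z_n+o_p(1)$, using~(\ref{eq:Istar}) together with Parseval's identity $\sum_k|c_k(\dot{s}_{\star})|^2=f_{\star}\int_0^{1/f_{\star}}\dot{s}_{\star}^2$ one obtains~(\ref{CLT})--(\ref{eq:asymVar}).

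The hard part is expected to be the asymptotic linearisation of the score: one must control, uniformly over $1\le k\le K_n$ with $K_n\to\infty$, a large family of oscillating double sums over the renewal process and justify the replacements $\overline{J_k(f_{0})}\approx n\overline{c_k}$ and $X_j\approx\mu j$; this is precisely where (H\ref{assum:random-design}), (H\ref{assum:strong-spread-out}) and~(\ref{eq:sobolev}) all come into play, and where the detailed calculations deferred to Section~\ref{sec:proofs} are concentrated. A secondary difficulty is the central limit theorem for $\sum_j(j-n/2)\rme^{2\rmi k\pi f_{\star}X_j}$ together with the tail control in $k$ leading to the infinite series in $\check{\sigma}^2$, the weights $(j-n/2)$ being of maximal order $n$. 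It is worth noting that the non-classical term in $\check{\sigma}^2$ (the one involving $c_k(s_{\star}\dot{s}_{\star})$) originates entirely from $Z_n^{s}$, that is, from the interaction of the signal with the irregular sampling, which is exactly what makes the asymptotic variance sub-optimal in this setting.
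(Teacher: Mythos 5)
Your overall skeleton is the same as the paper's: a Taylor expansion of the score at $f_{0}$ after localisation via Theorem~\ref{theo:mainCons}, the curvature limit (your $\kappa=\sigma_{\star}^2f_{\star}^2I_{\star}/f_{0}^2$ is exactly~\eqref{lambsec}), a linearisation of $\dot{\Lambda}_n(f_{0})$ that coincides with~\eqref{lambprim} after the centering step~\eqref{eq:Lincentrage}, the split of the linear statistic into a noise part and a signal part, conditional Gaussianity for the former, and your final variance algebra does reproduce~\eqref{eq:asymVar}. Where you genuinely diverge is in the two technical engines. For the score linearisation you work with the exact identity for $2\,\mathrm{Re}[\overline{J_k}J_k']$ and direct substitutions $\overline{J_k(f_{0})}\approx n\overline{c_k}$, $X_j-\overline{X_n}\approx\mu(j-n/2)$, then resum the harmonics; the paper instead expands $s_{\star}$ inside $D_n$, $\xi_n$ and works throughout with $\varphi_{n,X}$ and its derivatives, using the moment bounds of Lemma~\ref{moments} — the same computation in different bookkeeping, and your leading terms are correct. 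For the CLT of the signal term $U_n=n^{-3/2}\sum_j(j-n/2)(s_{\star}\dot{s}_{\star})(X_j)$, the paper proves Proposition~\ref{Prop:CLT}: the process $X_k$ modulo the period is a uniformly Doeblin Markov chain (this is precisely where (H\ref{assum:strong-spread-out}) is used), one solves a Poisson equation and invokes the Meyn--Tweedie functional CLT, the weights $(j-n/2)$ being absorbed by integrating against the Brownian limit. You instead compute per-harmonic second moments by summing geometric series in $\Phi(2k\pi f_{\star})$ — your identity $1+2\,\mathrm{Re}[\Phi/(1-\Phi)]=(1-|\Phi|^2)/|1-\Phi|^2$ is exactly how $\gamma_g^2$ in~\eqref{eq:gammaG} arises — and then appeal to an unspecified martingale/mixing CLT with Cram\'er--Wold. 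The variance comes out right, but the asymptotic normality of the weighted oscillating sums, the decorrelation across harmonics and the tail control in $k$ are precisely the content of Proposition~\ref{Prop:CLT}; you would still have to build that piece (e.g.\ via the explicit per-harmonic Poisson solution $\rme^{2\rmi\pi k f_{\star}x}/(1-\Phi(2\pi k f_{\star}))$ and a weighted martingale CLT), and your sketch never says where (H\ref{assum:strong-spread-out}) enters, whereas it is essential to the paper's ergodicity argument. Also note that the weighted law of large numbers needed for your curvature and $A_n$-type limits is the other half of that proposition, namely~\eqref{llnWeightMC}.

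Two smaller points of precision. First, asymptotic independence of $Z_n^{\varepsilon}$ and $Z_n^{s}$ does not follow merely from the independence of $\{\varepsilon_j\}$ and $\{X_j\}$, since $Z_n^{\varepsilon}$ depends on both; the clean device — which your first sentence of that paragraph in fact sets up, and which the paper uses — is the exact representation $Z_n\stackrel{d}{=}A_nZ+U_n$ with $Z\sim\mathcal{N}(0,\sigma_{\star}^2)$ independent of the design (this uses the Gaussianity in (H\ref{assum:distribution-noise})) together with $A_n\to\mathrm{const}$ in probability. Second, for the deterministic part of the score, ``the truncation at $K_n$ is harmless since $\sum_{|k|>K_n}|c_k|\to0$'' is too quick: because of the factor $k$ you need $\sum_{|k|>K_n}|k|\,|c_k|\to0$ (fine under~\eqref{eq:sobolev}), and the harmlessness also relies on the $O_p(n^{3/2})$ size of the oscillating sums $\sum_jX_j\rme^{2\rmi\pi(q-k)f_{0}X_j}$ furnished by Lemma~\ref{moments}, not just on small Fourier tails — a crude absolute-value bound would only give $o_p(n)$, which is not $o_p(\sqrt{n})$.
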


\begin{proof}[Proof (sketch)]

To derive (\ref{eq:LinearizationHatf}), we use a Taylor expansion of $\dot{\Lambda}_n(f)$, the first derivative
of $\Lambda_n(f)$ with respect to $f$, which provides
$$
\dot{\Lambda}_n(\hat{f}_n)=\dot{\Lambda}_n(f_{0})+(\hat{f}_n-f_{0})\ddot{\Lambda}_n(f'_n),
$$
where $f'_n$ is random and lies between $\hat{f}_n$ and $f_{0}$.
We prove in Sections~\ref{sec:proof-eq.-eqrefllambprim} and~\ref{sec:proof-eq.-eqrefllambsec} that
\begin{eqnarray}\label{lambprim}
\dot{\Lambda}_n(f_{0})=\sum_{j=1}^n \left(\frac{X_j}{n}-\frac{\mu}{2}\right)\frac{\dot{s}_{\star}(X_j)}{f_{0}}
\left(\varepsilon_j+s_{\star}(X_j)\right) +o_{p}(\sqrt{n}),\\
\label{lambsec}
\ddot{\Lambda}_n(f'_n)=-n^2 \frac{\mu^2}{12\;f_{0}}\int_0^{1/f_0} \dot{s}_{\star}^2(t)dt +o_{p}(n^2).
\end{eqnarray}
Since $\dot{s}_{\star}^2$ is $1/f_\star$-periodic and $f_\star/f_0$ is an integer, we have
$$
\frac{\mu^2}{12}\int_0^{1/f_0} \dot{s}_{\star}^2(t)dt=
\frac{\mu^2f_\star}{12\;f_{0}}\int_0^{1/f_{\star}} \dot{s}_{\star}^2 (t) dt=\frac{\sigma_\star^2f_\star^2I_\star}{f_0}
$$

The last three displayed equations and the assumption on $\dot{\Lambda}_n(\hat{f}_n)$ thus yield
$$
n^{3/2}(\hat{f}_n-f_{0})=\frac{f_0}{\sigma_\star^2f_\star^2I_\star\;n^{3/2}}
\sum_{j=1}^n
\left(X_j-\frac{n\mu}{2}\right)\dot{s}_{\star}(X_j)\left(\varepsilon_j+s_{\star}(X_j)\right) (1+ o_{p}(1)) + o_{p}(1)\;,
$$
and Relations~(\ref{eq:LinearizationHatf}) and \eqref{CLT} then follow from
\begin{align}\label{eq:Lincentrage}
&n^{-3/2}\sum_{j=1}^n
\left(X_j-j\mu\right)\dot{s}_{\star}(X_j)\varepsilon_j=o_p(1) \ \text{and}\
n^{-3/2}\sum_{j=1}^n\left(X_j-j\mu\right)\dot{s}_{\star}(X_j)s_{\star}(X_j)=o_p(1) \;,\\
  \label{eq:SnConv}
&S_n \eqdef \frac{\mu}{n^{3/2}\sigma_{\star}^2 f_{\star}I_{\star}}\sum_{j=1}^n
\left(j-n/2\right)\dot{s}_{\star}(X_j)\left(\varepsilon_j+s_{\star}(X_j)\right)
\stackrel{\mathcal{L}}{\longrightarrow}\mathcal{N}(0,\check{\sigma}^2)\;.
\end{align}
The proof of \eqref{eq:Lincentrage} follows from straightforward computations and is not detailed here.
We conclude with the proof of~(\ref{eq:SnConv}). By~(H\ref{assum:distribution-noise}), we have
$ S_n\stackrel{d}{=}  \frac{\mu}{\sigma_{\star}^2 f_{\star}I_{\star}} \left(A_n Z + U_n\right)$
where $A_n\eqdef n^{-3/2}\left(\sum_{j=1}^n \left(j-\frac{n}{2}\right)^2 {\dot{s}_{\star}}^2(X_j)\right)^{1/2}$,
$U_n={n^{-3/2}}\sum_{j=1}^n \left(j-n/2\right)(s_{\star}\dot{s}_{\star})(X_j)$
and $Z$ has distribution $\mathcal{N}(0,\sigma_{\star}^2)$
and is independent from the $X_j$'s. Therefore, since $Z$ and $U_n$ are independent,~(\ref{eq:SnConv}) follows from the two assertions
\begin{align}
\label{An}
& A_n=\left(\frac{1}{12}c_0({\dot{s}_{\star}}^2)\right)^{1/2} (1+o_p(1)) \; ;\\
\label{WnTCL}
&\frac{\mu}{\sigma_{\star}^2 f_{\star}I_{\star}} U_n
\stackrel{\mathcal{L}}{\longrightarrow} \mathcal{N}\left(0,\frac{\sum_{k\neq 0} |c_k(s_{\star}\dot{s}_{\star})|^2
\left(\frac{1-|\Phi(2k\pi f_{\star})|^2}{|1-\Phi(2k\pi f_{\star})|^2}\right)}{I_{\star}\,\sigma_{\star}^2\sum_{k\in\zset}
|c_k(\dot{s}_{\star})|^2}\right) \; .
\end{align}
Assertions \eqref{An} and \eqref{WnTCL} follow straightforwardly from (\ref{llnWeightMC}) and (\ref{eq:gammaG})
in Proposition \ref{Prop:CLT} respectively.
\end{proof}

\begin{remark}\label{RemExp}
If the $\{V_k\}$ are exponentially distributed (the sampling scheme is a Poisson process),
then~(\ref{eq:asymVar}) yields
$
\check{\sigma}^2=
I_{\star}^{-1}
\left\{1+\|s_{\star}\dot{s}_{\star}\|_2^2/
(\sigma_{\star}^2\|\dot{s}_{\star}\|_2^2) \right\},
$
$\|\cdot\|_p$ denoting the usual $L^p$ norm on $[0,1/f_{\star}]$.
\end{remark}

In~\cite{gassiat:levyleduc:2006}, the local asymptotic normality (LAN) of the semiparametric model~(\ref{eq:Y})
is established for regular sampling with decreasing sampling instants. Their arguments can be extended to the
irregular sampling scheme. More precisely, any estimator satisfying the asymptotic linearization
\begin{equation}\label{eq:LinearizationOptimal}
n^{3/2}(\bar{f}_n-f_{\star})=\left(\frac{\mu}{n^{3/2}\sigma_{\star}^2 f_{\star}I_{\star}}\sum_{j=1}^n
\left(j-\frac{n}{2}\right)\dot{s}_{\star}(X_j)\varepsilon_j\right)(1+o_p(1))\;,
\end{equation}
where $I_\star$ is defined in~(\ref{eq:Istar}),  is an efficient semiparametric estimator of $f_{\star}$ in the sense of
\cite{macneney:wellner:2000}.
As a byproduct of the proof of Theorem~\ref{theo:mainCLT}, one has that the right hand-side of~(\ref{eq:LinearizationOptimal})
is asymptotically normal with mean zero and variance $I_{\star}^{-1}$. Hence $I_{\star}^{-1}$ is the optimal asymptotic
variance.
In view of~(\ref{eq:LinearizationHatf}) and~(\ref{CLT}), we see that the linearization of our estimator $\hat{f}_n$ contains
an extra term since $\varepsilon_j$ in~(\ref{eq:LinearizationOptimal}) is replaced by $(\varepsilon_j+s_{\star}(X_j))$
in~(\ref{eq:LinearizationHatf}). This extra term leads to an additional term in the asymptotic variance~(\ref{eq:asymVar}),
which highly depends on the distribution of the $V_k$'s.
Hence our estimator enjoys the optimal $n^{-3/2}$ rate but is not efficient.
The estimator proposed in~\cite{hall:reimann:rice:2000} is efficient and thus, in
theory, outperforms the CLSP estimator. On the other hand, our estimator is numerically more
tractable, and it does not require a preliminary consistent estimator. In contrast,
an interval containing the true frequency with size at
$o_p (n^{-(3/2-1/12)})$ is required in the assumptions of~\cite{hall:reimann:rice:2000} (see p.554 after conditions (a)--(e))
and, whether this assumption is necessary is an open question.
Nevertheless, since our estimator is rate optimal, it can be used as a preliminary estimator
to the one of~\citet*{hall:reimann:rice:2000}.

\section{Numerical experiments}\label{implementation}

Let us now  apply the proposed estimator to periodic variable stars
which are known to emit light whose intensity, or brightness, changes over
time in a smooth and periodic manner. The estimation of the period is of direct
scientific interest, for instance as an aid to classifying stars into different categories for
making inferences about stellar evolution. The irregularity in the observation
is often due to poor weather conditions and to instrumental constraints.

We benchmark the CLSP estimator with
the least-squares method (see \eqref{eq:ResidualSumofSquares}), which is reported as giving
the best empirical results in an extended simulation experiment which can be found in \cite{hall:reimann:rice:2000}.
In this Monte-Carlo experiment, we generate synthetic observations corresponding to model \eqref{eq:Y}
where the underlying deterministic function $s_{\star}$ is obtained by fitting a trigonometric polynomial
of degree 6 to the observations of a Cepheid variable star avalaible from the MACHO database
(\textsf{http://www.stat.berkeley.edu/users/rice/UBCWorkshop}). Figure \ref{fig1bis} displays
in its left part the observations of the Cepheid as points with coordinates
$(X_j~\textrm{modulo}~3.9861,Y_j)$, where $3.9861$ is the known period of the Cepheid 
and the $X_j$ are the observation times given by the MACHO
database. In the right part of Figure \ref{fig1bis}, the observations 
$(X_j,s_{\star}(X_j))$ are displayed as points with coordinates $(X_j~\textrm{modulo}~3.9861,s_{\star}(X_j))$ 
where $s_{\star}$ is a trigonometric polynomial of degree 6 fitted to the observations of the Cepheid variable
star that we shall use.
\begin{figure}[!ht]
\begin{center}
\includegraphics*[width=7cm]{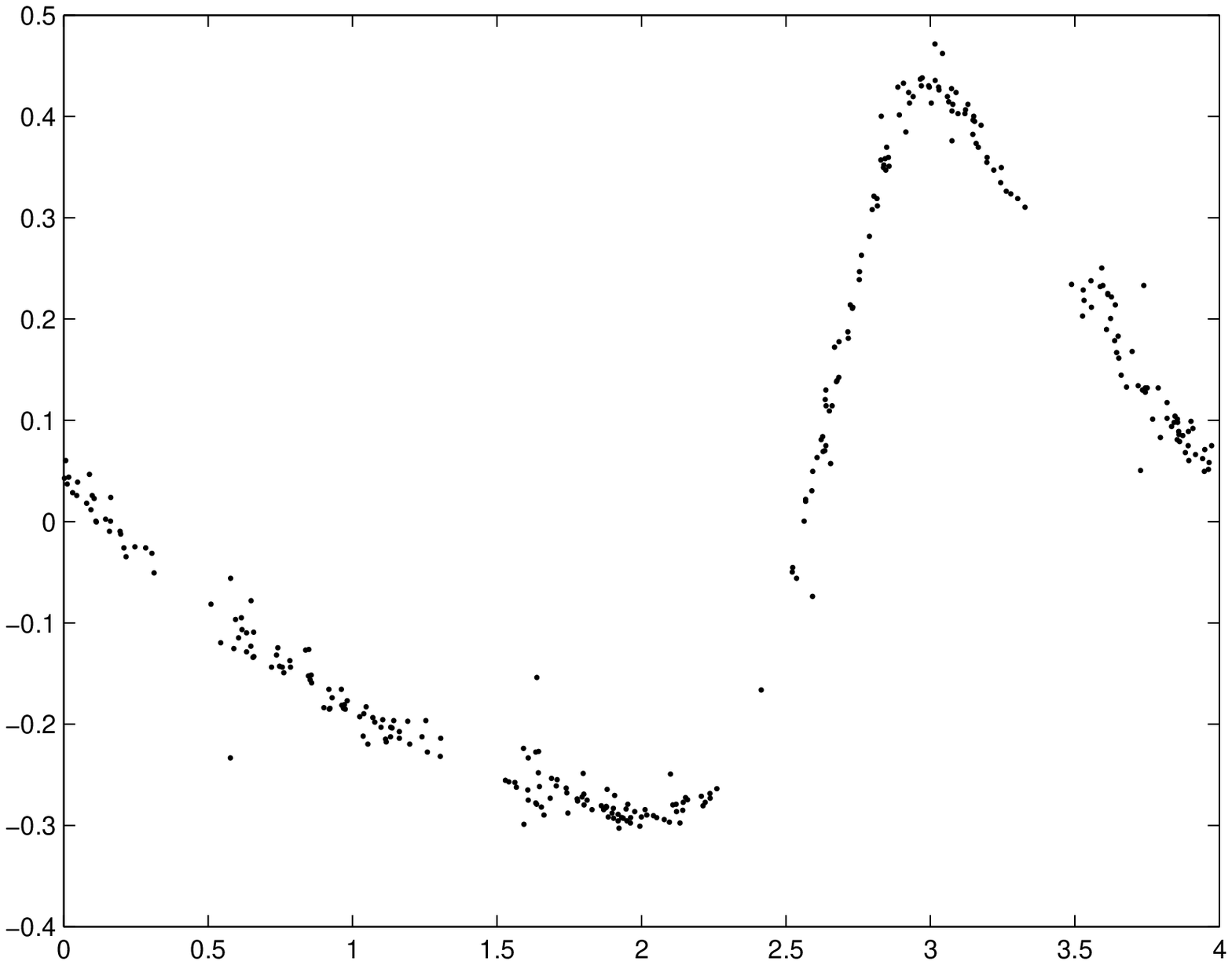}
\hspace{0.5cm}
\includegraphics*[width=7cm]{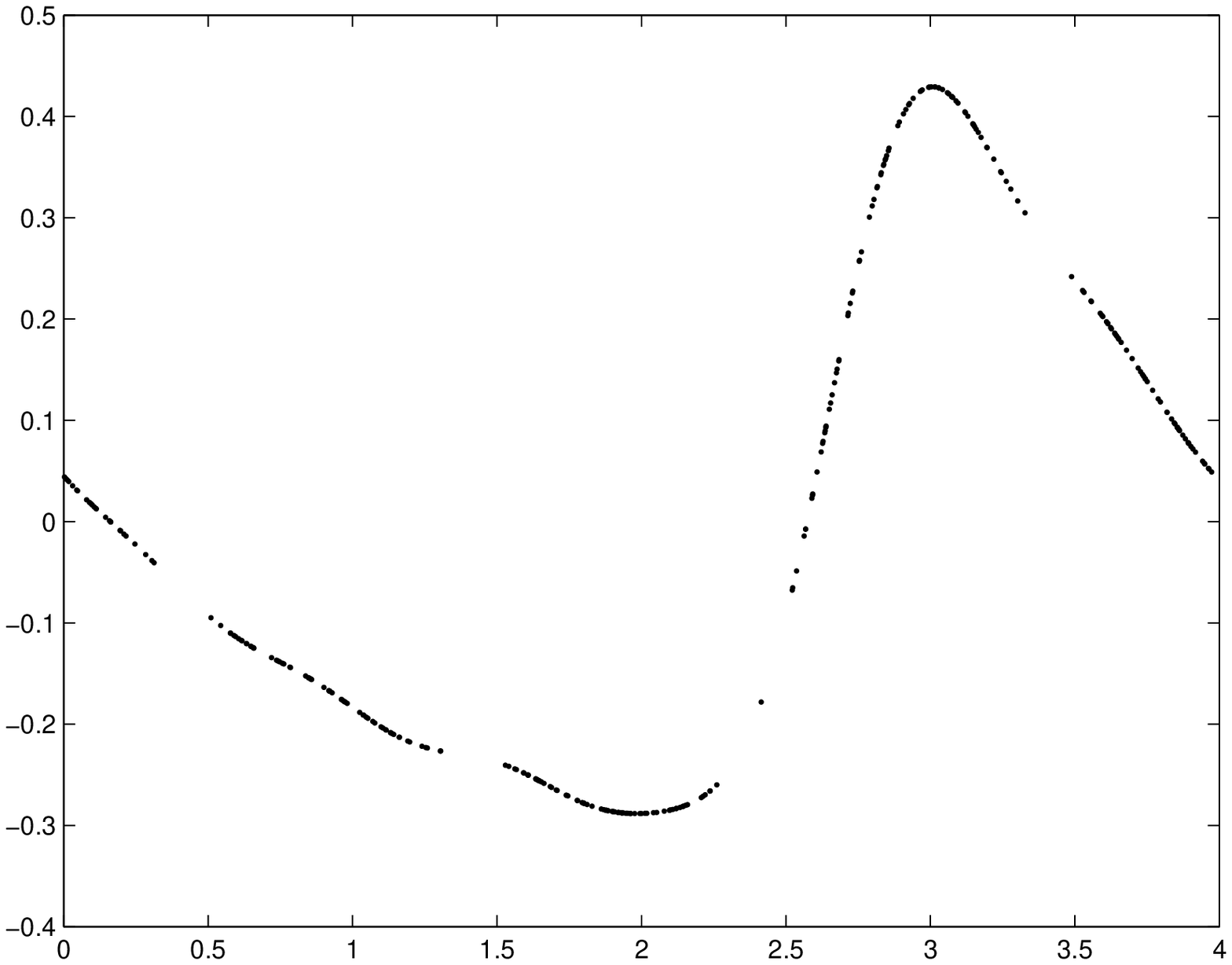}
\end{center}
\caption{Left: Cepheid observations, Right: Trigonometric polynomial $s_{\star}$ fitted to the Cepheid observations.}
\label{fig1bis}
\end{figure}

Let us now describe further the framework of our experiments.
The inter-arrivals $\{V_k\}$ have an exponential distribution 
with mean $1/5$.
The additive noise is i.i.d. Gaussian with standard deviations equal to 0.07 and 0.23 respectively
(the corresponding signal to noise ratios (SNR) are 10dB and 0dB). Typical
realizations of the observations that we process 
are shown in Figure \ref{fig2bis}  when $f_{\star}=0.25$ and $n=300$ in the two previous cases on the left and
right side respectively. More precisely, the observations $(X_j,Y_j)$ are displayed as stars
with coordinates $(X_j~\textrm{modulo}~1/f_{\star},Y_j)$ and the observations $(X_j,s_{\star}(X_j))$
of the underlying function $s_{\star}$ are displayed as points with coordinates:
$(X_j~\textrm{modulo}~1/f_{\star},s_{\star}(X_j))$. Since $E(V_1)=1/5$, $n=300$ and $1/f_{\star}=4$
approximately 15 periods are overlaid.
\begin{figure}[!ht]
\begin{center}
\includegraphics*[width=7cm]{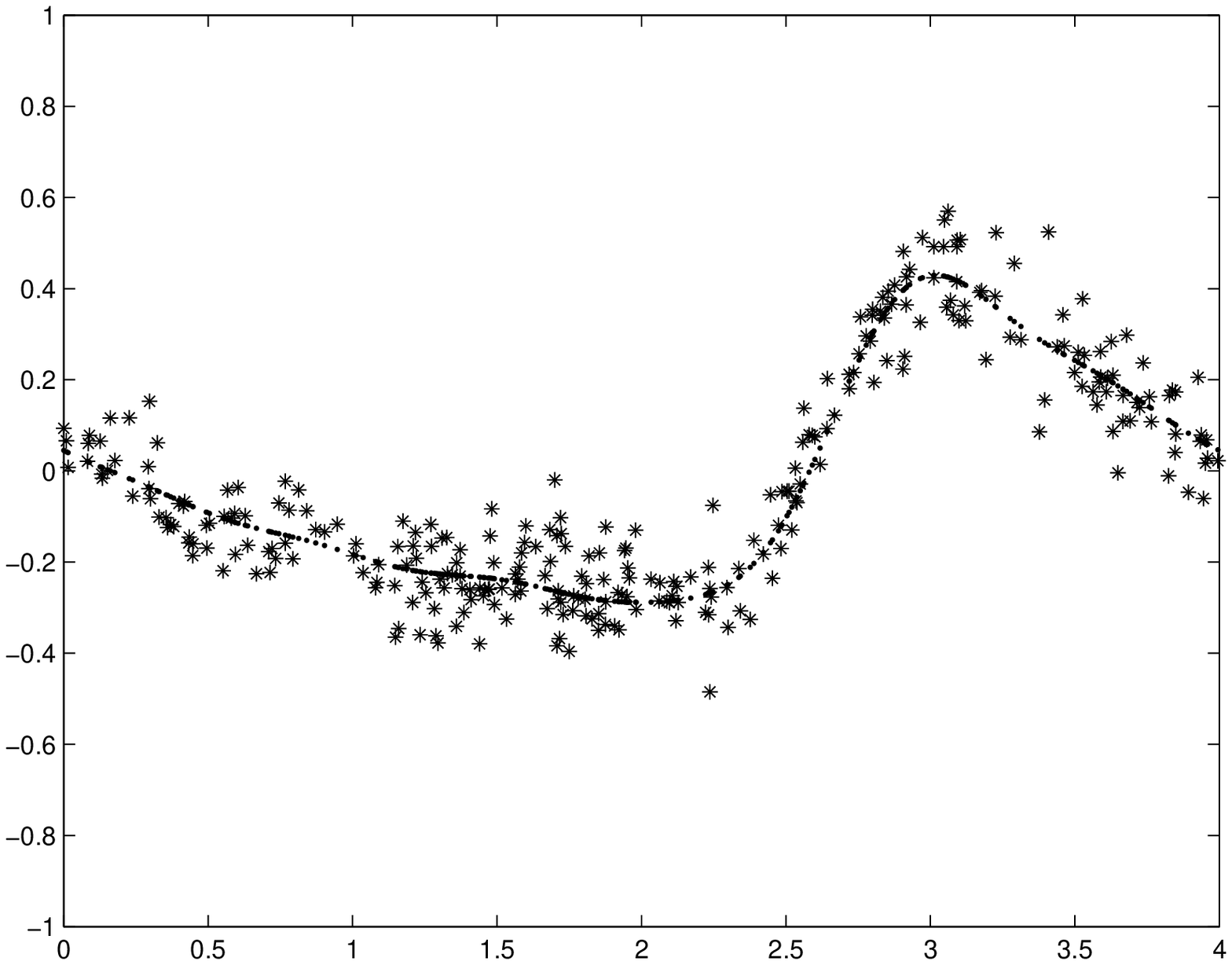}
\hspace{0.5cm}
\includegraphics*[width=7cm]{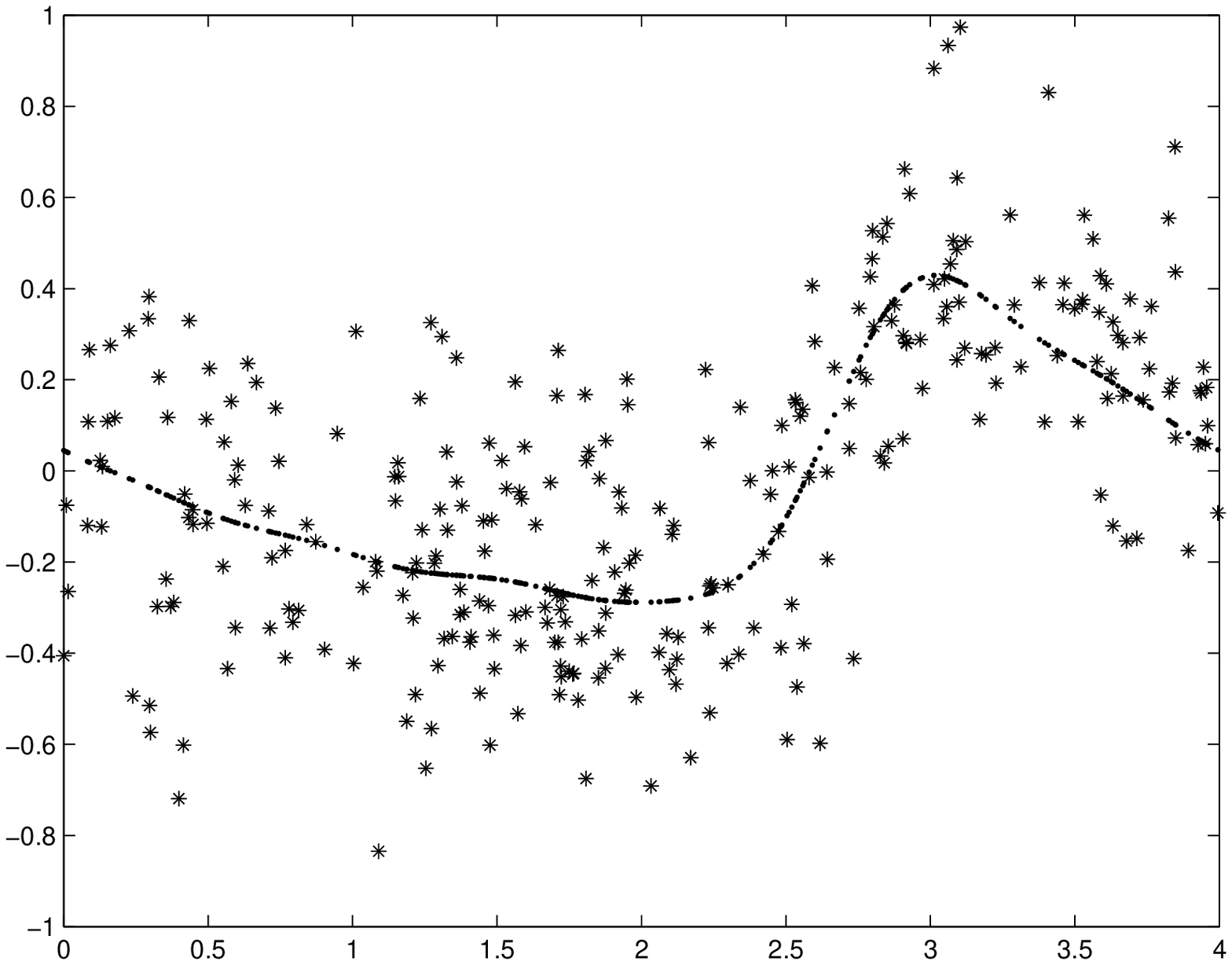}
\end{center}
\caption{Deterministic signal ('.') and noisy observations ('$\ast$') with SNR=10dB (left) and 0dB (right).}
\label{fig2bis}
\end{figure}

The least-squares and cumulated periodogram criteria ($L_n$ and $\Lambda_n$) are maximized on a grid
ranging from 0.2 to 0.52 with regular mesh $5\times 10^{-5}$.
Since the fundamental frequency is equal to 0.25,
the chosen range does not contain a sub-multiple of the
fudamental frequency but a multiple. Hence we are in the case $\ell=1$ in~(\ref{eq:FminFmax}), see
Remark~\ref{rem:consistency}. We used $K_n=1,2,4,6,8$.
The results of 100 Monte-Carlo experiments are summarized in Table \ref{tableFourier}.
We display the biases, the standard deviations (SD) and the optimal standard
deviations forecast by the theoretical study when $n=300, 600$ and SNR=10dB, 0dB.
\begin{table}[!ht]
\begin{footnotesize}
\hspace{-15mm}
\begin{tabular}{c | c c | c c | c c | c c}
&\multicolumn{4}{c}{SNR=10dB} & \multicolumn{4}{|c}{SNR=0dB} \\
\hline
& \multicolumn{2}{c|}{$n=300$}&\multicolumn{2}{c|}{$n=600$}&\multicolumn{2}{c|}{$n=300$} & \multicolumn{2}{c}{$n=600$}\\
\hline
Optimal SD &\multicolumn{2}{c|}{$1.21\times 10^{-4}$}&\multicolumn{2}{c|}{$4.28\times 10^{-5}$}
&\multicolumn{2}{c|}{$3.83\times 10^{-4}$} & \multicolumn{2}{c}{$1.35\times 10^{-4}$}\\
\hline
Method  & Bias$\times 10^{-5}$  & SD $\times 10^{-4}$ & Bias $\times 10^{-5}$ & SD$\times 10^{-4}$
& Bias$\times 10^{-4}$  & SD $\times 10^{-4}$ & Bias $\times 10^{-5}$ & SD$\times 10^{-4}$  \\
\hline
LS1    & 2.30 & 3.03    & 0.20   & 1.25               & 1.00 & 6.81  & 5.80   & 2.62    \\
CP1    & 7.70 & 5.13    & 3.28   & 2.11               & 0.00 & 8.20  & 7.20   & 2.91    \\
LS2    & 4.50 & 1.73    & 0.24   & 0.66               & 1.12 & 4.84  & 3.60   & 1.85   \\
CP2    & 5.90 & 4.09    &-0.16   & 1.76               & 0.49 & 6.88  & 2.50   & 2.12   \\
LS4    & 1.99 & 1.15    & 0.00   & 0.48               & 0.79 & 4.67  & 3.70   & 1.45   \\
CP4    & 1.89 & 3.83    &-3.08   & 1.58               & 0.31 & 6.07  & 1.99   & 2.13   \\
LS6    & 1.30 & 1.13    &-0.40   & 0.48               & 0.97 & 5.38  & 2.40   & 1.44   \\
CP6    &-2.20 & 4.23    &-2.96   & 1.75               & 0.37 & 7.58  & 2.70   & 2.22   \\
LS8    & 0.60 & 1.19    &-0.24   & 0.50               & 1.24 & 6.33  & 2.10   & 1.63   \\
CP8    &-1.30 & 5.28    &-2.32   & 1.75               & 0.79 & 9.02  & 4.40   & 2.72    \\
\hline
\end{tabular}
\caption{Biases, standard deviations of the frequency
  estimates for different methods: Least-squares (LS1, LS2, LS4, LS6, LS8), Cumulated
  Periodogram (CP1, CP2, CP4, CP6, CP8) with $K_n=1,2,4,6,8$.}
\label{tableFourier}
\end{footnotesize}
\end{table}

From the results gathered in Table \ref{tableFourier}, we get that the
least-squares estimator produces better results than the CLSP estimator.
Nevertheless, the CLSP estimator can be used as an accurate preliminary estimator 
of the frequency  since its computational cost is lower than the one of the 
least-squares estimator. For both estimators, the parameter $K_n$ has to be chosen 
carefully in order to achieve the best trade-off between bias and variance. 
Finding a way of choosing $K_n$ adaptively is left for future research.

\section{Detailed proofs}\label{sec:proofs}

In this section we provide some important intermediary results and we detail the arguments sketched in the proofs of Section~\ref{results}.

\subsection{Technical lemmas}\label{sec:techlem}

The following Lemma provides upper bounds for the moments of  the empirical characteristic function of $X_1,\dots,X_n$,
\begin{equation}\label{eq:phinxDef}
\varphi_{n,X}(t)=\frac{1}{n}\sum_{j=1}^n \rme^{\textrm{i} t X_j} \eqsp.
\end{equation}
\begin{lemma}\label{moments}
Let~(H\ref{assum:random-design}) hold.
Then, for any non-negative integer $k$, there exists a positive constant $C$ such that for all $t\in\rset$,
\begin{align}\label{eq:phinxDerkBounds1}
\left|\PE[\varphi_{n,X}^{(k)}(t)]\right|\leq C\,\max_{1 \leq l \leq k} \PE[V_1^l]\,
n^{k-1}(n\wedge |t|^{-1})\leq C\, (1+\PE(V_1^k))\, n^{k-1}(n\wedge |t|^{-1})\; , \\
\label{eq:phinxDerkBounds2}
\PE\left[\left|\varphi_{n,X}^{(k)}(t)\right|^2\right]\leq
\PE(V_1^{2k}) n^{2k-1}+
C (1+\PE(V_1^k))n^{2k-1}(n\wedge |t|^{-1})\;.
\end{align}
\end{lemma}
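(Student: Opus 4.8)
We may assume $\PE[V_1^k]<\infty$ in~\eqref{eq:phinxDerkBounds1} and $\PE[V_1^{2k}]<\infty$ in~\eqref{eq:phinxDerkBounds2}, the bounds being trivial otherwise; this also justifies differentiation under the expectation, so that $\varphi_{n,X}^{(k)}(t)=\rmi^k n^{-1}\sum_{j=1}^n X_j^k\rme^{\rmi tX_j}$, and the second inequality in~\eqref{eq:phinxDerkBounds1} follows from the first because $\PE[V_1^l]\le1+\PE[V_1^k]$ for $1\le l\le k$ (split the expectation at $V_1=1$). The one genuinely analytic ingredient is the lower bound
\begin{equation}\label{plan:lower}
|1-\Phi(t)|\ge c\,(1\wedge|t|),\qquad t\in\rset ,
\end{equation}
valid for some $c>0$, which I would obtain from two local facts: near $t=0$, $\Phi$ is $C^1$ with $\Phi(0)=1$ and $\Phi'(0)=\rmi\mu$, $\mu=\PE(V_1)>0$, so $|1-\Phi(t)|/|t|\to\mu$; for $|t|$ bounded away from $0$, Cramer's condition in~(H\ref{assum:random-design}) gives $\sup_{|s|\ge\epsilon}|\Phi(s)|<1$, hence $|1-\Phi(t)|\ge1-|\Phi(t)|$ is bounded below. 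From~\eqref{plan:lower} one readily deduces $\min(n,|1-\Phi(t)|^{-1})\le C\,(n\wedge|t|^{-1})$; and, by summation by parts from $|\sum_{i=0}^{m}z^i|\le\min(m+1,2|1-z|^{-1})$ (the relevant coefficients below being nonnegative and increasing in $j$), one obtains for $z=\Phi(t)$, integers $r\ge0$ and $M\le n$:
\begin{equation}\label{plan:abel}
\Big|\sum_{j=r}^{M}j^{\underline r}\,z^{j-r}\Big|\le C_r\,M^r\min\big(M,|1-z|^{-1}\big),\qquad j^{\underline r}:=j(j-1)\cdots(j-r+1) .
\end{equation}

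For~\eqref{eq:phinxDerkBounds1} with $t\ne0$, the plan is to write $\PE[\varphi_{n,X}^{(k)}(t)]=(-\rmi)^k n^{-1}\frac{d^k}{dt^k}H_n(\Phi(t))$ with $H_n(z)=\sum_{j=1}^n z^j$, and apply Fa\`{a} di Bruno's formula: the derivative becomes a finite, $k$-dependent sum of terms $H_n^{(r)}(\Phi(t))\prod_{i=1}^r\Phi^{(m_i)}(t)$ over partitions of $\{1,\dots,k\}$ into $r$ blocks of sizes $m_1,\dots,m_r$ (so $\sum_i m_i=k$, $1\le r\le k$). One then uses $|\Phi^{(m_i)}(t)|\le\PE[V_1^{m_i}]$, the bound $\prod_i\PE[V_1^{m_i}]\le\PE[V_1^k]\le\max_{1\le l\le k}\PE[V_1^l]$ from Lyapunov's inequality (since $\sum_i m_i=k$), and the estimate~\eqref{plan:abel} applied to $H_n^{(r)}(z)=\sum_{j=r}^n j^{\underline r}z^{j-r}$. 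Summing over the boundedly many partitions gives $|\PE[\varphi_{n,X}^{(k)}(t)]|\le C_k\max_{l\le k}\PE[V_1^l]\,n^{k-1}\min(n,|1-\Phi(t)|^{-1})$, whence~\eqref{eq:phinxDerkBounds1}; the case $t=0$ is covered by the crude bound $\PE[X_j^k]\le j^k\PE[V_1^k]$ (convexity of $x\mapsto x^k$).

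For~\eqref{eq:phinxDerkBounds2} I would expand $\PE[|\varphi_{n,X}^{(k)}(t)|^2]=n^{-2}\sum_{j,j'=1}^n\PE[X_j^kX_{j'}^k\rme^{\rmi t(X_j-X_{j'})}]$ and separate the diagonal. The diagonal $j=j'$ contributes $n^{-2}\sum_{j\le n}\PE[X_j^{2k}]\le\PE[V_1^{2k}]\,n^{2k-1}$, the first term. For $j<j'$, writing $X_{j'}=X_j+W$ with $W=\sum_{m=j+1}^{j'}V_m$ independent of $X_j$ and $W\stackrel{d}{=}X_{j'-j}$, the phases telescope to $X_j^k(X_j+W)^k\rme^{-\rmi tW}$; a binomial expansion of $(X_j+W)^k$ and independence give $\PE[X_j^kX_{j'}^k\rme^{\rmi t(X_j-X_{j'})}]=\sum_{p=0}^k\binom kp\PE[X_j^{2k-p}]\,\PE[W^p\rme^{-\rmi tW}]$. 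Since $\PE[X_j^{2k-p}]$ does not depend on $j'$, I would sum over $j'$ first: $\sum_{j'=j+1}^n\PE[W^p\rme^{-\rmi tW}]=\sum_{m=1}^{n-j}\PE[X_m^p\rme^{-\rmi tX_m}]$, which up to a unimodular constant equals $\frac{d^p}{dt^p}H_{n-j}(\Phi(-t))$ and is therefore, by the previous paragraph, $\le C_p\max_{l\le p}\PE[V_1^l]\,n^p\min(n,|1-\Phi(t)|^{-1})$ in modulus. Inserting $\PE[X_j^{2k-p}]\le j^{2k-p}\PE[V_1^{2k-p}]$, summing over $j$ and over $p\le k$, dividing by $n^2$ and collecting the moments by Lyapunov's inequality, the off-diagonal part is $\le C\,(1+\PE[V_1^{2k}])\,n^{2k-1}\min(n,|1-\Phi(t)|^{-1})$, which with~\eqref{plan:lower} yields~\eqref{eq:phinxDerkBounds2} (the precise moment order of the second term being reached with only minor additional bookkeeping).

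The step I expect to be the main obstacle is exactly this bookkeeping through the combinatorial expansions, keeping track of the powers of $n$, of the increment $j'-j$ and of the moments of $V_1$ — and, above all, never passing to $|\Phi(t)|$ too early: the whole gain $n\wedge|t|^{-1}$ rests on estimating partial sums $\sum_j j^{\underline r}\Phi(t)^{j-r}$ through $|1-\Phi(t)|$, which is of order $|t|$ near $0$ by~\eqref{plan:lower}, whereas $1-|\Phi(t)|$ is only of order $t^2$ and would degrade the bound to $n\wedge|t|^{-2}$. The inequality~\eqref{plan:lower}, a two-line consequence of the differentiability of $\Phi$ at $0$ and of Cramer's condition, is the only non-mechanical point.
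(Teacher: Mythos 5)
The paper offers no proof to compare against (it dismisses the lemma as ``straightforward algebra''), so your proposal stands or falls on its own. Its architecture is the natural one and is essentially what the authors must have in mind: write $\PE[\varphi_{n,X}^{(k)}(t)]=n^{-1}\frac{d^k}{dt^k}\sum_{j=1}^n\Phi(t)^j$, bound the derivatives $\sum_{j}j(j-1)\cdots(j-r+1)\,\Phi(t)^{j-r}$ by Abel summation against $\min\bigl(n,|1-\Phi(t)|^{-1}\bigr)$, and for \eqref{eq:phinxDerkBounds2} split off the diagonal and use the independent increment $W=X_{j'}-X_j$. All of these steps check out (the diagonal gives exactly the first term of \eqref{eq:phinxDerkBounds2}; the fact that your off-diagonal constant involves moments of order up to $2k$ rather than the displayed $1+\PE(V_1^k)$ is immaterial, since $C$ may depend on the law of $V_1$ and both sides are infinite when the relevant moments are).

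The one step that fails is the sentence claiming that your lower bound $|1-\Phi(t)|\ge c\,(1\wedge|t|)$ ``readily'' yields $\min\bigl(n,|1-\Phi(t)|^{-1}\bigr)\le C\,(n\wedge|t|^{-1})$. It yields only $\min\bigl(n,|1-\Phi(t)|^{-1}\bigr)\le C\,\bigl(n\wedge(1\wedge|t|)^{-1}\bigr)$, which agrees with the claimed bound for $|t|\le1$ but not for $|t|>1$: there the right-hand side of the lemma decays like $|t|^{-1}$ for fixed $n$, whereas $|1-\Phi(t)|\le2$, so no estimate routed through $|1-\Phi(t)|$ can produce such decay. Indeed the literal statement cannot be proved under (H\ref{assum:random-design}) alone: take $V_1=1$ with probability $1/2$ and uniform on $[0,1]$ with probability $1/2$; then $\Phi(2\pi m)=1/2$ for every integer $m\neq0$, so for $k=0$ the left-hand side of \eqref{eq:phinxDerkBounds1} equals $n^{-1}(1-2^{-n})$ along $t=2\pi m$, while the right-hand side tends to $0$ as $m\to\infty$ with $n$ fixed. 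So what your argument actually establishes --- and all that can be established --- is the lemma with $n\wedge|t|^{-1}$ replaced by $n\wedge(1\wedge|t|)^{-1}$ (equivalently, the stated form with $C$ depending on an upper bound for $|t|$). This is a defect of the statement's formulation rather than of your strategy: every later use of the lemma, e.g.\ in Lemmas~\ref{outballs} and~\ref{inballs} and in Sections~\ref{sec:proof-eq.-eqrefllambprim}--\ref{sec:proof-eq.-eqrefllambsec}, only invokes it with $|t|$ bounded below by quantities such as $2\pi k\gamma_n$, $f_\star/l$ or $2\pi f_0$, and is covered by the corrected form. You should state explicitly that you prove the $(1\wedge|t|)^{-1}$ version and record why it suffices, instead of asserting the unprovable passage to $|t|^{-1}$.
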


The proof of Lemma \ref{moments} is omitted since it comes from straightforward algebra.
The following Lemma provides an exponential deviation inequality for
$\varphi_{n,X}$ defined in~(\ref{eq:phinxDef}).

\begin{lemma}\label{lem:deviation}
Under Assumption (H\ref{assum:random-design}), we have, for all $x>0$ and $t\in\rset$,
\begin{eqnarray}\label{eq:ExpDev}
\PP\left(\left|\varphi_{n,X}(t)-\PE(\varphi_{n,X}(t))\right|\geq x\right)
\leq 4\exp\left(-\frac{n x^2\left|1-\Phi(t)\right|}{16(2+\sqrt{2})}\right),
\end{eqnarray}
where $\Phi$ is the characteristic function of $V$ defined in~(\ref{eq:Phi}).
\end{lemma}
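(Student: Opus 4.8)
The plan is to convert the dependent sum $n\varphi_{n,X}(t)=\sum_{j=1}^n\rme^{\rmi tX_j}$ into a martingale by summation by parts, using that the renewal increments $V_j$ are i.i.d., and then to apply a Bernstein-type deviation inequality for martingales. Write $g_j\eqdef\rme^{\rmi tX_j}$ (so $g_0=1$) and $\mathcal{F}_j\eqdef\sigma(V_1,\dots,V_j)$; since $V_j$ is independent of $\mathcal{F}_{j-1}$, we have $\PE[g_j\mid\mathcal{F}_{j-1}]=\Phi(t)g_{j-1}$, so that $d_j\eqdef g_j-\Phi(t)g_{j-1}=g_{j-1}\bigl(\rme^{\rmi tV_j}-\Phi(t)\bigr)$ is a martingale difference sequence with respect to $(\mathcal{F}_j)$. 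A telescoping computation gives $(1-\Phi(t))\sum_{j=1}^n g_j=M_n-\Phi(t)(g_n-1)$ with $M_n\eqdef\sum_{j=1}^n d_j$, and, since $\PE[g_j]=\Phi(t)^j$ and $\PE[M_n]=0$, subtracting the mean yields
\begin{equation*}
n\,(1-\Phi(t))\bigl(\varphi_{n,X}(t)-\PE[\varphi_{n,X}(t)]\bigr)=M_n-\Phi(t)\bigl(g_n-\PE[g_n]\bigr)\;.
\end{equation*}
Here the boundary term satisfies $|\Phi(t)(g_n-\PE[g_n])|\le 2$, and the increments satisfy the two bounds $|d_j|=|\rme^{\rmi tV_j}-\Phi(t)|\le 2$ and $\PE[|d_j|^2\mid\mathcal{F}_{j-1}]=1-|\Phi(t)|^2$, so that $M_n$ is a bounded complex martingale whose predictable quadratic variation is at most $n(1-|\Phi(t)|^2)\le 2n|1-\Phi(t)|$, where we used $1-|\Phi(t)|\le|1-\Phi(t)|$.

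A few regimes are trivial and should be discarded at the outset: if $\Phi(t)=1$ then $t=0$ by~(H\ref{assum:random-design}) and the probability on the left of~(\ref{eq:ExpDev}) is zero; since $|\varphi_{n,X}(t)-\PE[\varphi_{n,X}(t)]|\le 2$ the inequality is vacuous for $x>2$; and if $n|1-\Phi(t)|x^2$ lies below a suitable absolute constant then the right-hand side of~(\ref{eq:ExpDev}) is $\ge 1$ and there is nothing to prove. Hence one may assume $\Phi(t)\ne 1$, $0<x\le 2$, and $n|1-\Phi(t)|x$ large (in particular $\ge 4$). On the event $\{|\varphi_{n,X}(t)-\PE[\varphi_{n,X}(t)]|\ge x\}$, the displayed identity together with the bound on the boundary term gives $|M_n|\ge n|1-\Phi(t)|x-2\ge\tfrac12 n|1-\Phi(t)|x$, so it suffices to bound $\PP\bigl(|M_n|\ge\tfrac12 n|1-\Phi(t)|x\bigr)$.

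For this I would split $M_n$ into its real and imaginary parts, each a real martingale with increments bounded by $2$ and predictable quadratic variation at most $2n|1-\Phi(t)|$, apply to each a Bernstein/Freedman inequality of the form $\PP(|N_n|\ge a)\le 2\exp\bigl(-a^2/\{2(v+2a)\}\bigr)$ (valid when the increments are bounded by $2$ and the quadratic variation is at most $v$), and combine via $\{|M_n|\ge s\}\subset\{|\mathrm{Re}\,M_n|\ge s/\sqrt2\}\cup\{|\mathrm{Im}\,M_n|\ge s/\sqrt2\}$, which accounts for the prefactor $4$. Taking $s=\tfrac12 n|1-\Phi(t)|x$, hence $a=s/\sqrt2$ and $v=2n|1-\Phi(t)|$, and using $x\le 2$ to bound $2a\le\sqrt2\,n|1-\Phi(t)|$ so that $v+2a\le(2+\sqrt2)n|1-\Phi(t)|$, the exponent becomes $a^2/\{2(v+2a)\}\ge n|1-\Phi(t)|x^2/\{16(2+\sqrt2)\}$, which is precisely the rate stated in~(\ref{eq:ExpDev}).

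The only genuinely delicate point is that we need the first power of $|1-\Phi(t)|$ in the exponent: a plain Azuma--Hoeffding bound on $M_n$, which ignores the conditional variance, would yield only $|1-\Phi(t)|^2$ and would be too weak when $t$ is close to $0$. This forces the use of the variance estimate $1-|\Phi(t)|^2\le 2|1-\Phi(t)|$ together with a Bernstein-type inequality, and the accompanying bookkeeping — keeping the linear-in-$a$ correction term of the same order as $v$, which is where the reductions $x\le 2$ and ``$n|1-\Phi(t)|x^2$ large'' are spent — is the part that requires care.
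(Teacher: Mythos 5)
Your argument is correct, and it reproduces the stated bound with the exact constant $16(2+\sqrt2)$, but it organizes the martingale step differently from the paper. The paper never factors $(1-\Phi(t))$ out in front: it writes the centered sum \emph{exactly} as a weighted martingale, $n(\varphi_{n,X}(t)-\PE[\varphi_{n,X}(t)])=\sum_{q=1}^n\alpha_{n,q}(t)\Pi_q(t)$ with deterministic weights $\alpha_{n,q}(t)=(1-\Phi(t))^{-1}(1-\Phi^{n-q+1}(t))$ and the same increments $\Pi_q(t)=\rme^{\rmi tX_q}-\Phi(t)\rme^{\rmi tX_{q-1}}$ as your $d_q$; the bounds $|\alpha_{n,q}\Pi_q|\leq 4|1-\Phi(t)|^{-1}$ and $\PE^{\mathcal{F}_{q-1}}[|\alpha_{n,q}\Pi_q|^2]\leq 8|1-\Phi(t)|^{-1}$ then feed directly into Bernstein's inequality for martingales, for all $x>0$ and $t$, with no remainder term. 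Your route instead multiplies through by $(1-\Phi(t))$, which yields the unweighted martingale $M_n$ plus the boundary term $\Phi(t)(g_n-\PE[g_n])$, and you must then spend the preliminary reductions ($\Phi(t)\neq1$, $x\leq2$, $n|1-\Phi(t)|x^2$ bounded below, so that the boundary term is at most half the deviation level) to absorb it; the paper's weighting makes these reductions unnecessary. What the two proofs share is the essential point you correctly identify: the single power of $|1-\Phi(t)|$ in the exponent comes from the conditional variance $1-|\Phi(t)|^2\leq 2|1-\Phi(t)|$ of the increments, so a Bernstein/Freedman inequality (rather than Azuma--Hoeffding) is indispensable, and the prefactor $4$ arises from splitting into real and imaginary parts and using the two-sided bound on each. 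Your version is slightly more elementary in its algebra at the price of the case analysis; the paper's is shorter because the identity is exact and uniform in $t$ and $x$.
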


\begin{proof}
Note that $\prod_{k=1}^j \textrm{e}^{\textrm{i} t V_k} -\Phi^j(t) =\sum_{q=1}^j \Phi^{j-q}(t) \Pi_q(t)$
where $\Pi_q(t) \eqdef \prod_{k=1}^{q} \textrm{e}^{\textrm{i} t V_k}-\Phi(t) \prod_{k=1}^{q-1} \textrm{e}^{\textrm{i} t V_k} \eqsp.$
Thus,
\begin{eqnarray*}
n\left(\varphi_{n,X}(t)-\PE\left[\varphi_{n,X}(t)\right]\right)=\sum_{j=1}^n
\left[\left(\prod_{k=1}^j \textrm{e}^{\textrm{i} t V_k}\right)-\Phi^j(t)\right]=
\sum_{j=1}^n\sum_{q=1}^j \Phi^{j-q}(t) \Pi_q(t)=\sum_{q=1}^n
\alpha_{n,q}(t) \Pi_q(t)
\end{eqnarray*}
where $\alpha_{n,q}(t)=\sum_{j=q}^n \Phi^{j-q}(t)=(1-\Phi(t))^{-1}(1-\Phi^{n-q+1}(t)),$
the last equality being valid as soon as $\Phi(t)\neq 1$. Let $\mathcal{F}_q$ denotes the $\sigma$-field generated by
$V_1,\dots,V_q$. Note that
$\left\{\alpha_{n,q}(t)\Pi_q(t),\ q\geq 1\right\}$ is a
martingale difference adapted to the filtration $(\mathcal{F}_q)_{q\geq1}$ and
$$
|\alpha_{n,q}(t)\Pi_q(t)|\leq 4|1-\Phi(t)|^{-1}\quad\text{and}\quad
\PE^{\mathcal{F}_{q-1}}\left[ |\alpha_{n,q}(t)\Pi_q(t)|^2 \right] \leq \frac{4(1-|\Phi(t)|^2)}{|1-\Phi(t)|^2} \leq  8|1-\Phi(t)|^{-1} \; ,
$$
where $\PE^{\mathcal{F}_q}$ denotes the conditional expectation  given $\mathcal{F}_q$. The proof then follows from
Bernstein inequality for martingales (see \cite{steiger:1969} or \cite{freedman:1975}).
\end{proof}

For completeness, we state the following result, due to \citet*{golubev:1988}.
\begin{lemma}\label{golubev:1988}
Let $\mathcal{L}$ be a stochastic process defined on an interval  $I\subseteq\rset$.
Then, for all $\lambda, R >0$,
$$
\PP\left(\sup_{\tau\in I}\mathcal{L}(\tau)>R\right) \leq \rme^{-\lambda R}
\sup_{\tau\in  I}\left(\sqrt{\PE\left(\rme^{2\lambda\mathcal{L}(\tau)}\right)}\right)
\left(1+\lambda\int_{\tau\in I}\sqrt{\PE\left(\left|\dot{\mathcal{L}}(\tau)\right|^2\right)}d\tau\right).
$$
\end{lemma}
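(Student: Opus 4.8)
The plan is to reduce everything to the exponential Markov inequality and the fundamental theorem of calculus, glued together by a single application of Cauchy--Schwarz. First I would fix $\lambda,R>0$, set $Z(\tau)\eqdef\rme^{\lambda\mathcal{L}(\tau)}$, and observe that since $x\mapsto\rme^{\lambda x}$ is increasing, $\sup_{\tau\in I}Z(\tau)=\rme^{\lambda\sup_{\tau\in I}\mathcal{L}(\tau)}$, so that
$$
\PP\left(\sup_{\tau\in I}\mathcal{L}(\tau)>R\right)=\PP\left(\sup_{\tau\in I}Z(\tau)>\rme^{\lambda R}\right)\leq \rme^{-\lambda R}\,\PE\left[\sup_{\tau\in I}Z(\tau)\right]\;.
$$
Everything then comes down to bounding $\PE\left[\sup_{\tau\in I}Z(\tau)\right]$.

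The crucial point is to apply the fundamental theorem of calculus to the path $\tau\mapsto Z(\tau)$ itself, not to $\mathcal{L}$: exponentiating a bound on $\sup_{\tau}\mathcal{L}(\tau)$ would create an intractable factor of the form $\PE\left[\rme^{\lambda\int_I|\dot{\mathcal{L}}|}\right]$, whereas working with $Z$ keeps the deviation term linear in $Z$. Since $\dot{Z}(\tau)=\lambda\dot{\mathcal{L}}(\tau)Z(\tau)$, for any fixed reference point $\tau_0\in I$ and any $\tau\in I$ one has
$$
Z(\tau)=Z(\tau_0)+\int_{\tau_0}^{\tau}\dot{Z}(u)\,du\leq Z(\tau_0)+\int_{I}\left|\dot{Z}(u)\right|\,du=Z(\tau_0)+\lambda\int_{I}\left|\dot{\mathcal{L}}(u)\right|Z(u)\,du\;,
$$
so that $\sup_{\tau\in I}Z(\tau)\leq Z(\tau_0)+\lambda\int_{I}|\dot{\mathcal{L}}(u)|Z(u)\,du$. (When $I$ is bounded one may equivalently average this over $\tau_0\in I$; the outcome is the same.)

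Taking expectations, using Tonelli to exchange $\PE$ and $\int_{I}$ (legitimate since the integrand is nonnegative), then Cauchy--Schwarz in $u$ applied to the product $|\dot{\mathcal{L}}(u)|\cdot Z(u)$, together with the elementary bound $\PE\left[\rme^{\lambda\mathcal{L}(\tau_0)}\right]\leq\sqrt{\PE\left[\rme^{2\lambda\mathcal{L}(\tau_0)}\right]}$, I would obtain
$$
\PE\left[\sup_{\tau\in I}Z(\tau)\right]\leq \sqrt{\PE\left[\rme^{2\lambda\mathcal{L}(\tau_0)}\right]}+\lambda\int_{I}\sqrt{\PE\left[\left|\dot{\mathcal{L}}(u)\right|^2\right]}\,\sqrt{\PE\left[\rme^{2\lambda\mathcal{L}(u)}\right]}\,du\;.
$$
Bounding each factor $\sqrt{\PE[\rme^{2\lambda\mathcal{L}(\cdot)}]}$ by its supremum over $I$ and inserting this into the Markov bound of the first step yields exactly the stated inequality. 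The probabilistic content here is very light; the only points that require care are the measurability of $\sup_{\tau\in I}\mathcal{L}(\tau)$ and the almost sure absolute continuity of $\tau\mapsto Z(\tau)$ needed for the fundamental theorem of calculus — both guaranteed by the implicit regularity assumptions on the process $\mathcal{L}$ — so I do not anticipate any genuine obstacle.
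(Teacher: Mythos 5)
Your proof is correct: Markov's inequality applied to $\rme^{\lambda\mathcal{L}(\tau)}$, the fundamental theorem of calculus applied to the exponentiated path, then Tonelli and Cauchy--Schwarz give exactly the stated bound, modulo the implicit path regularity (absolutely continuous sample paths, measurability of the supremum) that you correctly flag. The paper itself states this lemma without proof, attributing it to Golubev (1988), so there is no internal argument to compare against; your derivation is the standard one behind that reference and fills the gap adequately.
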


\subsection{Useful intermediary results}\label{sec:usef-interm-results}
We present here some intermediary results which may be of independent interest.

\begin{lemma}\label{eta}
Assume (H\ref{assum:random-design})-(H\ref{assum:distribution-noise}) and that $s_{\star}$ is bounded.
Define $\xi_n(f)$ and
$\zeta_n(f)$ by~(\ref{eq:XinDef}) and~(\ref{eq:zetaDef}) where $(K_n)$ is a sequence tending to infinity at most with a
polynomial rate. Then, for any  $0<\fmin<\fmax$, $\delta>0$ and $q=0,1,\dots$,
\begin{equation}\label{eq:UnifEta}
\sup_{f\in [\fmin , \fmax]} \left|\xi^{(q)}_n(f)+\zeta^{(q)}_n(f)\right|=o_p(K_n^{q+1} n^{q-1/2+\delta}) \;,
\end{equation}
where, for any function $h$, $h^{(q)}$ denotes the $q$-th derivative of $h$.
\end{lemma}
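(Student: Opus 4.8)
The plan is to split $\xi_n+\zeta_n$ and bound the two pieces separately, $\zeta_n^{(q)}$ turning out to be of strictly smaller order than what is claimed. Writing
\[
A_k(f)\eqdef\sum_{j=1}^n s_{\star}(X_j)\,\rme^{-2\rmi\pi k f X_j}\ ,\qquad
B_k(f)\eqdef\sum_{j=1}^n \varepsilon_j\,\rme^{-2\rmi\pi k f X_j}\ ,
\]
one has $\xi_n(f)=\tfrac{2}{n^2}\sum_{k=1}^{K_n}\mathrm{Re}[A_k(f)\overline{B_k(f)}]$ and $\zeta_n(f)=\tfrac{1}{n^2}\sum_{k=1}^{K_n}|B_k(f)|^2$. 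Differentiating $m$ times in $f$ simply inserts the factor $(-2\rmi\pi k X_j)^m$ inside each sum, so the Leibniz rule expresses $\xi_n^{(q)}(f)$ and $\zeta_n^{(q)}(f)$ as $\tfrac1{n^2}\sum_{k\le K_n}$ of linear combinations of the terms $A_k^{(m)}(f)\overline{B_k^{(q-m)}(f)}$, resp. $B_k^{(m)}(f)\overline{B_k^{(q-m)}(f)}$, $0\le m\le q$, and the task becomes to bound these uniformly in $f\in[\fmin,\fmax]$. First I would restrict attention to the event $\Omega_n\eqdef\{X_n\le 2\mu n\}$ (with $\mu=\PE(V_1)$), which has probability tending to one by the strong law of large numbers since $\PE(V_1)<\infty$; it then suffices to prove~(\ref{eq:UnifEta}) on $\Omega_n$.

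On $\Omega_n$, boundedness of $s_\star$ gives the deterministic estimate $|A_k^{(m)}(f)|\le\|s_\star\|_\infty(2\pi k)^m n X_n^m\le C K_n^m n^{m+1}$, uniformly in $f$ and in $k\le K_n$. For the noise factors one must use cancellation, not the triangle inequality: conditionally on $\{X_j\}$, $B_k^{(m)}(f)$ is a centered complex Gaussian variable with $\CPE{|B_k^{(m)}(f)|^2}{\{X_j\}}=\sigma_\star^2\sum_{j=1}^n(2\pi k X_j)^{2m}\le C K_n^{2m}n^{2m+1}$ on $\Omega_n$, hence conditional standard deviation of order $K_n^m n^{m+1/2}$. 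Combining these, after dividing by $n^2$ and summing the $K_n$ terms one gets the pointwise rates $\xi_n^{(q)}(f)=O_p(K_n^{q+1}n^{q-1/2})$ and $\zeta_n^{(q)}(f)=O_p(K_n^{q+1}n^{q-1})$, both $o_p(K_n^{q+1}n^{q-1/2+\delta})$. The remaining, and main, difficulty is to upgrade these to bounds uniform in $f$.

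For the uniformity I would apply Lemma~\ref{golubev:1988}, conditionally on $\{X_j\}$, to each of the processes $f\mapsto\pm\xi_n^{(q)}(f)$ and $f\mapsto\pm\zeta_n^{(q)}(f)$. For $\xi_n^{(q)}$, conditional Gaussianity gives $\CPE{\rme^{2\lambda\xi_n^{(q)}(f)}}{\{X_j\}}=\exp\bigl(2\lambda^2\Var(\xi_n^{(q)}(f)\mid\{X_j\})\bigr)$, and the bounds above yield, on $\Omega_n$, $\Var(\xi_n^{(q)}(f)\mid\{X_j\})\le C K_n^{2q+2}n^{2q-1}$ together with $\CPE{|\xi_n^{(q+1)}(f)|^2}{\{X_j\}}\le C K_n^{2q+4}n^{2q+1}$, hence $\int_{\fmin}^{\fmax}\sqrt{\CPE{|\xi_n^{(q+1)}(f)|^2}{\{X_j\}}}\,df\le C K_n^{q+2}n^{q+1/2}$. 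Plugging $R=\eta K_n^{q+1}n^{q-1/2+\delta}$ into Lemma~\ref{golubev:1988} and optimizing over $\lambda$, the exponential prefactor is at most $\exp(-c\eta^2 n^{2\delta})$ while the bracketed factor grows only polynomially in $n$; since $K_n$ is at most polynomial, the bound tends to zero for every $\eta>0$, i.e. $\sup_{f\in[\fmin,\fmax]}|\xi_n^{(q)}(f)|=o_p(K_n^{q+1}n^{q-1/2+\delta})$. For $\zeta_n^{(q)}$ the same scheme applies, with the one modification that, conditionally on $\{X_j\}$, $\zeta_n^{(q)}(f)$ is a quadratic form in the Gaussian vector $(\varepsilon_1,\dots,\varepsilon_n)$: its conditional Laplace transform is finite for $\lambda$ below the reciprocal of the conditional operator norm of the associated matrix, and satisfies a sub-exponential bound whose scale — like the conditional mean — is controlled on $\Omega_n$, via the same estimates on the $B_k^{(m)}$, at order $K_n^{q+1}n^{q-1}$; since this term is a further factor $n^{-1/2}$ below the target, the argument goes through unchanged. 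Combining the two bounds on $\Omega_n$ with $\PP(\Omega_n^c)\to0$ yields~(\ref{eq:UnifEta}).

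The step I expect to be the genuine obstacle is the uniformity while keeping the $n^{-1/2}$ gain: any estimate of the noise sums through $\sum_j|\varepsilon_j|$ loses a factor $n^{1/2}$ and is worthless, so one has to carry the conditional Gaussian structure of the $B_k^{(m)}$ (and, for $\zeta_n$, the Gaussian quadratic-form structure) all the way into Lemma~\ref{golubev:1988}, checking in particular that each $f$-derivative costs only a factor of order $K_n n$, both in the pointwise variance and in the derivative term entering the maximal inequality. The norm bookkeeping for the quadratic form $\zeta_n^{(q)}$ is the fussiest point, but it is lightened by the fact that this term is of strictly smaller order than what is claimed.
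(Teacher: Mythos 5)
Your proposal is correct and takes essentially the same route as the paper's proof: conditionally on the design, exploit the Gaussian structure of the noise (linear forms for $\xi_n^{(q)}$, quadratic forms for $\zeta_n^{(q)}$) to obtain exponential moment bounds, feed these into Lemma~\ref{golubev:1988} together with a second-moment bound on the $f$-derivative, and control the sampling times on a set of probability tending to one. The paper only packages this slightly differently (it bounds the $k$-sum by $K_n^{q+1}$ times a single process $2L_q(X,f)^T\varepsilon+\varepsilon^T\Gamma_q(X,f)\varepsilon$ over $0<f\le K_n\fmax$ and handles large $X_n$ by Markov's inequality rather than your event $\{X_n\le 2\mu n\}$), which is bookkeeping rather than a different argument.
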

\begin{proof}
%
By~(\ref{eq:XinDef}) and~(\ref{eq:zetaDef}),
\begin{equation}
\xi^{(q)}_n(f)
=2\frac{(2\pi)^q}{n^2} \sum_{k=1}^{K_n} k^q L_q(X,kf)^T\varepsilon \; , \quad
\label{eq:zetaQ}
\zeta^{(q)}_n(f)
=\frac{(2\pi)^q}{n^2} \sum_{k=1}^{K_n} k^q \varepsilon^T \Gamma_q(X,kf)\varepsilon \; ,
\end{equation}
where $\varepsilon=[\varepsilon_1,\dots,\varepsilon_n]^T$, $ L_q(X,f) \eqdef \left[\sum_{j=1}^n
(X_j-X_{j'})^q \cos^{(q)}\left\{2\pi f(X_j-X_{j'})\right\}s_{\star}(X_j)\right]_{1\leq j'\leq n}
$ and $\Gamma_q(X,f) \eqdef \left[\rmi^q (X_l-X_j)^q\rme^{2\rmi\pi (X_l-X_j) f}\right]_{1\leq l,j\leq n}$.
Hence,
\begin{equation}\label{eq:ZetaNsansK}
\sup_{f\in[\fmin,\fmax]}\left|\xi^{(q)}_n(f)+\zeta^{(q)}_n(f)\right|\leq
C\,n^{-2}\,K_n^{q+1}\sup_{0<f\leq K_n\fmax}
\left|2 L^T_q(X,f)\varepsilon + \varepsilon^T \Gamma_q(X,f)\varepsilon\right| \; .
\end{equation}
Note that $\trace[\Gamma_q(X,f)]=0$ for $q\geq 1$, $\trace[\Gamma_q(X,f)]=n$ for $q=0$
and that the spectral radius of the matrix $\Gamma_q(X,f)$ is at most
$\sup_{j=1,\dots,n}\sum_{l=1}^n |X_l-X_j|^q\leq n X_n^q$. For any hermitian matrix $\Lambda$ having all its eigenvalues less than $1/4$,
$\PE\left[\exp(Z^T\Lambda Z)\right]\leq\exp(\trace(\Lambda)+2\trace(\Lambda^2))$.
Therefore, for any $\lambda>0$, on the event $ \left\{ \lambda\sigma^{\star\,2}  n X_n^q\leq1/8 \right\}$,
\begin{multline*}
\CPE{\rme^{\lambda (2 L_q(X,f)^T\varepsilon + \varepsilon^T \Gamma_q(X,f)\varepsilon)}}{X}
\leq C'\exp\left\{C \lambda^2 \left(L_q(X,f)^TL_q(X,f)+ \trace(\Gamma_q^2(X,f))\right)\right\}\nonumber\\
\leq C'\exp\left\{C \lambda^2 n^3 X_n^{2q}\right\} \; ,
\end{multline*}
where we have used $L^T_q(X,f) L_q(X,f) \leq C n^3 X_n^{2q}$ and $\trace[ \Gamma_q^2(X,f) ] \leq C n^2 X_n^{2q}$.
Using~(H\ref{assum:distribution-noise}), we similarly get that
$\CPE{|L_{q}(X,f)^T\varepsilon|^2}{X} \leq C L_{q}^T(X,f) L_{q}(X,f) \leq Cn^3 X_n^{2q} $
and
\begin{equation}
  \label{eq:quadBoundcondX}
 \CPE{|\varepsilon^T \Gamma_{q}(X,f)\varepsilon|^2}{X} \leq C \trace[ \Gamma^2_{q}(X,f) ] \leq C\,n^2X_n^{2q} \; .
\end{equation}
Applying Lemma~\ref{golubev:1988}, we get that, for all positive
numbers $\lambda$ and $R$, on the event $\{ \lambda\sigma^{\star\,2}  n X_n^q\leq1/8 \}$,
\begin{multline*}
\CPP{\sup_{0<f\leq K_n\fmax} \left|2 L_q(X,f)^T\varepsilon + \varepsilon^T \Gamma_q(X,f) \varepsilon\right|\geq R}{X}
\leq \\ C'\, \rme^{-\lambda R +C\lambda^2n^3 X_n^{2q}}\,\left(1+C\,K_n\,\lambda\,n^{3/2}\,X_n^{q+1}\right) \; .
\end{multline*}
Let $\delta>0$. Applying this inequality with $\lambda=n^{-3/2}X_n^{-q}$ and $R=n^{\delta+3/2}X_n^q$, we get
$$
\CPP{\sup_{0<f\leq K_n\fmax} \left|2 L_q(X,f)^T\varepsilon + \varepsilon^T \Gamma_q(X,f) \varepsilon\right| \geq n^{\delta+3/2}X_n^q}{X}\leq C\, \exp(-n^\delta)\,\left(1+K_n\,X_n\right) \; .
$$
Now, using~(\ref{eq:ZetaNsansK}),
$\PP\left(\sup_{f\in [\fmin , \fmax]} \left|\xi_n^{(q)}(f)+\zeta_n^{(q)}(f)\right|\geq n^{q-1/2+2\delta}K_n^{q+1}\right)$
\begin{eqnarray*}
&\leq& \PP\left(\sup_{0<f\leq K_n\fmax}\left|2 L_X(f)^T\varepsilon + \varepsilon^T \Gamma_X(f)\varepsilon\right|\geq
n^{\delta+3/2}X_n^q\right) + \PP\left(X_n^q \geq n^{q+\delta}\right)  \\
&\leq&  C\, \exp(-n^\delta)\,\left(1+K_n\,n\right)  + n^{-\delta/q} \; ,
\end{eqnarray*}
which concludes the proof.
\end{proof}

Let us introduce the following notation. For some sequence $(\gamma_n)$, and $f_{\star}>0$, we define, for any positive integers
$n,j$ and $l$,
\begin{multline}\label{eq:bnj_lDef}
B_n(j,l) \eqdef [\fmin,\fmax]\cap\{f~:~ |f-jf_{\star}/l|\leq \gamma_n\}
\textrm{ and }  B_n^c(j,l) \eqdef [\fmin,\fmax]\setminus B_n(j,l)\; .
\end{multline}

\begin{lemma}\label{outballs}
Assume (H\ref{assum:periodic-function})--(H\ref{assum:random-design})  and that $s_{\star}$ satisfies~(\ref{eq:Wiener}).
Define $D_n(f)$ by~(\ref{eq:DnDef}), with a sequence $(K_n)$ tending to infinity.
Then, for any $\epsilon>0$, as $n$ tends to infinity,
$$
\sup_{f\in\bigcap_{j,l}B_n^c(j,l)}
D_n(f)=O_p\left(K_n \rem(m_n)^2+ \frac{\{K_n\,n\,m_n\}^\epsilon}{n\gamma_n}\right) \; ,
$$
where $(m_n)$ is a sequence of positive integers, $\rem$ is defined by~(\ref{eq:reste}),
$\bigcap_{j,l}$ is the intersection over integers $j\geq1$, $l=1,\dots,K_n$ and $B_n^c(j,l)$ is defined by~(\ref{eq:bnj_lDef})
with $0<\fmin<\fmax$ and $(\gamma_n)$ satisfying
\begin{equation}\label{eq:outballsGamman}
K_n\gamma_n\to 0\quad\text{and}\quad n\gamma_n \to\infty \; .
\end{equation}
\end{lemma}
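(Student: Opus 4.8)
The plan is to pass from $s_{\star}$ to its Fourier coefficients. Since $s_{\star}$ is $1/f_{\star}$--periodic, $s_{\star}(x)=\sum_{p\in\zset}c_p(s_{\star})\rme^{2\rmi\pi pf_{\star}x}$, so, by the definition~(\ref{eq:phinxDef}) of $\varphi_{n,X}$,
$$
D_n(f)=\sum_{k=1}^{K_n}\Bigl|\sum_{p\in\zset}c_p(s_{\star})\,\varphi_{n,X}\bigl(2\pi(pf_{\star}-kf)\bigr)\Bigr|^2 \;.
$$
First I would split the inner sum at $|p|=m_n$. The tail $|p|>m_n$ is bounded by $\rem(m_n)$ since $|\varphi_{n,X}|\le1$, hence, using $|a+b|^2\le 2|a|^2+2|b|^2$, contributes $O(K_n\rem(m_n)^2)$ to $D_n(f)$ uniformly in $f$. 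For the head $|p|\le m_n$, a weighted Cauchy--Schwarz inequality together with~(\ref{eq:Wiener}) --- set $M\eqdef\sum_{p\in\zset}|c_p(s_{\star})|<\infty$ --- bound its contribution to $D_n(f)$ by $2M^2\,\mathcal{S}_n(f)$, where $\mathcal{S}_n(f)\eqdef\sup_{|p|\le m_n}\sum_{k=1}^{K_n}|\varphi_{n,X}(2\pi(pf_{\star}-kf))|^2$. It therefore remains to prove that $\sup_{f\in\bigcap_{j,l}B_n^c(j,l)}\mathcal{S}_n(f)=O_p(\{K_nnm_n\}^{\epsilon}/(n\gamma_n))$ for every $\epsilon>0$.

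The key elementary observation is that, when $f$ lies outside every ball $B_n(j,l)$ and $|p|\le m_n$, the frequencies $t_k\eqdef 2\pi(pf_{\star}-kf)$, $k=1,\dots,K_n$, all satisfy $|t_k|\ge\pi\fmin$ except possibly a single index $k^{\ast}$ (which can occur only when $p\ge1$), and even then $|t_{k^{\ast}}|>2\pi k^{\ast}\gamma_n\ge 2\pi\gamma_n$ because $f\notin B_n(p,k^{\ast})$; indeed, any two distinct $t_k$'s differ by a nonzero multiple of $2\pi f\ge 2\pi\fmin$, so at most one of them can have modulus less than $\pi\fmin$. I would then write $|\varphi_{n,X}(t)|^2\le 2|\PE\varphi_{n,X}(t)|^2+2|\varphi_{n,X}(t)-\PE\varphi_{n,X}(t)|^2$ and treat the two parts separately. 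The mean part is deterministic: from $\PE[\varphi_{n,X}(t)]=n^{-1}\Phi(t)(1-\Phi^n(t))/(1-\Phi(t))$, valid for $t\ne0$, one has $|\PE\varphi_{n,X}(t)|\le 2/(n|1-\Phi(t)|)$; the Cramer condition in~(H\ref{assum:random-design}) makes $|1-\Phi(t)|$ bounded below by a positive constant on $\{|t|\ge\pi\fmin\}$, while, since $\mu\eqdef\PE[V_1]\in(0,\infty)$, $\Phi$ is differentiable at $0$ with $\Phi'(0)=\rmi\mu$, so $|1-\Phi(t)|\ge(\mu/2)|t|$ for $|t|$ small. Combining this with the previous observation --- only $t_{k^{\ast}}$ can drive $|1-\Phi(t_{k^{\ast}})|$ down to order $\gamma_n$, the other $|1-\Phi(t_k)|$ being of order one --- gives $\sum_{k=1}^{K_n}|\PE\varphi_{n,X}(t_k)|^2=o(1/(n\gamma_n))$, uniformly in $f$ and $p$, using $n\gamma_n\to\infty$ and $K_n\gamma_n\to0$ from~(\ref{eq:outballsGamman}).

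For the fluctuation part I would first use Lemma~\ref{moments} with $k=0$, namely $\PE|\varphi_{n,X}(t)-\PE\varphi_{n,X}(t)|^2\le\PE|\varphi_{n,X}(t)|^2\le n^{-1}(1+C(n\wedge|t|^{-1}))$, so that the same ``one resonant harmonic, the rest spread out'' bookkeeping yields $\PE[\sum_{k=1}^{K_n}|\varphi_{n,X}(t_k)-\PE\varphi_{n,X}(t_k)|^2]=O(1/(n\gamma_n))$ for each fixed $f$ and $|p|\le m_n$. The remaining --- and main --- difficulty is to upgrade this pointwise bound into one for $\sup_f\mathcal{S}_n(f)$ (i.e.\ uniformly also over $|p|\le m_n$) at the price of only a $\{K_nnm_n\}^{\epsilon}$ factor. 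I would do this by discretization: $f\mapsto\varphi_{n,X}(2\pi(pf_{\star}-kf))$ is Lipschitz with constant at most $2\pi kX_n$, and $X_n\le 2n\mu$ with probability tending to $1$ by the law of large numbers, so a grid of $f$'s of cardinality polynomial in $K_nnm_n$ approximates $\sum_k|\varphi_{n,X}(t_k)-\PE\varphi_{n,X}(t_k)|^2$ within $o(1/(n\gamma_n))$, uniformly; on the grid I would invoke the exponential deviation inequality of Lemma~\ref{lem:deviation} with $x_k$ chosen so that $nx_k^2|1-\Phi(t_k)|$ equals a fixed large multiple of $\log(K_nnm_n)$, and take a union bound over the grid, over $k\le K_n$ and over $|p|\le m_n$. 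Because $|1-\Phi(t_k)|$ is of order one except at the single resonant harmonic $k^{\ast}$, where it is of order $\gamma_n$, this produces $\sup_f\mathcal{S}_n(f)=O_p(\log(K_nnm_n)/(n\gamma_n))$, and $\log(K_nnm_n)\le\{K_nnm_n\}^{\epsilon}$ for $n$ large, which gives the lemma. The step requiring most care is precisely this uniformization, where one must track how the Bernstein constant in Lemma~\ref{lem:deviation} degrades like $\gamma_n$ at the resonant harmonic $k^{\ast}=k^{\ast}(f,p)$ --- which moves with $f$ --- while staying of order one at all the other harmonics.
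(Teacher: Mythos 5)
Your sketch is correct and reaches the stated bound (in fact with a $\log(K_n\,n\,m_n)$ factor in place of $\{K_n\,n\,m_n\}^\epsilon$), and its skeleton --- truncate the Fourier series at $|p|\le m_n$, separate $\PE[\varphi_{n,X}]$ from the fluctuation, discretize in $f$ using a Lipschitz bound of order $O_p(nK_n^2)$, and apply the martingale Bernstein inequality of Lemma~\ref{lem:deviation} at the grid points --- is the same as the paper's. The genuine difference lies in how the harmonics are aggregated. You first apply Cauchy--Schwarz to reduce to $\sup_{|p|\le m_n}\sum_{k\le K_n}|\varphi_{n,X}(2\pi(pf_{\star}-kf))|^2$ and then exploit the observation that, for $f$ outside all the balls, at most one harmonic (the \emph{resonant} one, with index $k^{\ast}(f,p)$) has $2\pi|pf_{\star}-kf|<\pi\fmin$, and even that one is at distance at least $2\pi\gamma_n$ from $0$; this lets you use per-harmonic thresholds $x_k$ with $|1-\Phi|$ of order one except at $k^{\ast}$, and a union bound over grid points, $k$ and $p$ then yields $O_p(\log(K_n\,n\,m_n)/(n\gamma_n))$. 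The paper instead keeps the weighted sum over $p$ inside the square, uses only the cruder uniform bound $2\pi|pf_{\star}-kf|\ge 2\pi k\gamma_n$ (valid for every $k\le K_n$), and compensates for the small-$k$ harmonics with logarithmic weights $\beta_k\propto k^{-1}$ in a single Chernoff-type bound for the whole quantity $\tilde{D}_{n,m}(f)$; this avoids tracking the moving resonant index at the cost of extra $\log K_n$ and $\{\cdot\}^{\delta}$ factors, so your route is marginally sharper while the paper's is slightly simpler to state uniformly. The one point you must nail down is exactly the one you flag: the deviation inequality has to be invoked at representative points that are still at distance of order $\gamma_n$ from the sub-multiples $pf_{\star}/k$ --- e.g.\ pick the representatives inside the deterministic set $\bigcap_{j,l}B_n^c(j,l)$ (as the paper does), or observe that a mesh $\rho\ll\gamma_n$ keeps $|f_q-pf_{\star}/k^{\ast}|\ge\gamma_n/2$ --- since otherwise the Bernstein constant at the resonant harmonic can degenerate; with that standard fix your argument goes through.
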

\begin{proof}
We use the Fourier expansion~(\ref{fourierCoeff}) of $s_{\star}$ defined with the minimal period $T=1/f_\star$.
Expanding $s_\star$ in~(\ref{eq:DnDef}) and using the definition of $\varphi_{n,X}$ in~(\ref{eq:phinxDef}) and of $\rem(m)$
in~(\ref{eq:reste}), we get
\begin{eqnarray}
\nonumber
D_n(f)
&\leq&  2 \sum_{k=1}^{K_n}\left|\sum_{|p|\leq m}
    c_p(s_{\star})\varphi_{n,X}\left\{2\pi(pf_{\star}-kf)\right\}\right|^2
+  2  K_n\rem(m)^2  \\
\label{eq:DnDecomp}
& \leq& 4  \sum_{k=1}^{K_n}\left|\sum_{|p|\leq m} c_p(s_{\star})
  \PE\left[\varphi_{n,X}\left\{2\pi(pf_{\star}-kf)\right\}\right]\right|^2 + 4\tilde{D}_{n,m}(f) +  2  K_n\rem(m)^2 \; ,
\end{eqnarray}
where we defined
\begin{equation}\label{eq:tildeD}
\tilde{D}_{n,m}(f) \eqdef \sum_{k=1}^{K_n}\left|\sum_{|p|\leq m} c_p(s_{\star})
  \left(\varphi_{n,X}\left\{2\pi(pf_{\star}-kf)\right\}-\PE\left[\varphi_{n,X}\left\{2\pi(pf_{\star}-kf)\right\}\right]\right)\right|^2 \; .
\end{equation}
For all positive integers $j$ and $l\leq
K_n$, and $f\in \mathsf{A}_n \eqdef \bigcap_{j',l'}B_n^c(j',l')$, $2\pi|jf_{\star}-lf|\geq2\pi l \gamma_n$.
Thus, using~(\ref{eq:phinxDerkBounds1}) with $k=0$ in Lemma~\ref{moments},~(\ref{eq:Wiener}), and $\lim_{n \to \infty} n \gamma_n = \infty$,
we get, for all $f\in \mathsf{A}_n$, and $n$ large enough,
\begin{equation}\label{eq:tildeDCentrage}
\sum_{k=1}^{K_n}\left|\sum_{|p|\leq m} c_p(s_{\star})
  \PE\left[\varphi_{n,X}\left\{2\pi(pf_{\star}-kf)\right\}\right]\right|^2
\leq \frac C{n^2}\sum_{k=1}^{K_n} \frac1{k^2\gamma^2_n} = O\left(n^{-2}\gamma_n^{-2}\right) \; .
\end{equation}
Consider now $\tilde{D}_{n,m}$. For $\rho > 0$ and $q=1, \dots, Q(\rho) \eqdef [\rho^{-1}(\fmax-\fmin)]$, define
$I_q \eqdef [\fmin +  (q-1) \rho, \fmin+ q \rho] \cap \mathsf{A}_n$.
Observe that $u \mapsto \varphi_{n,X}(u)$ is a Lipschitz function  with Lipschitz norm less than
$n^{-1}\sum_{j=1}^n X_j$ and bounded by 1. It follows that $f \mapsto \tilde{D}_{n,m}(f)$ is a Lipschitz function
with Lipschitz norm less than
$$
8\pi\left(\sum_{p}|c_p(s_{\star})|\right)^2 \sum_{k=1}^{K_n} k n^{-1} \sum_{j=1}^n (X_j+\PE[X_j]) \leq
C\,\left\{\frac{K_n^2}{n}\sum_{j=1}^n X_j + n K_n^2 \right\} \;.
$$
Thus, for any $q=1,\dots,Q(\rho)$ such that $I_q$ is non-empty, and any $f_q \in I_q$,
$
\sup_{f\in I_q} \tilde{D}_{n,m}(f) \leq \tilde{D}_{n,m}(f_q) + C\,\rho\,\left\{K_n^2 n^{-1}\sum_{j=1}^n X_j + n K_n^2
\right\}  \; ,
$
which implies
\begin{equation}\label{eq:splitdtildeNQ}
\sup_{f\in \mathsf{A}_n}\tilde{D}_{n,m}(f) \leq \sup_{q=1,\dots,Q(\rho)}\tilde{D}_{n,m}(f_q) + O_p\left(\rho nK_n^2\right) \; ,
\end{equation}
where, by convention, $\tilde{D}_{n,m}(f_q)=0$ if $I_q$ is empty.
Since by~(H\ref{assum:random-design}), $\inf_{t\in\rset}\left|1-\Phi(t)\right|/(1\wedge |t|) >0,$ and for $n$ large enough, $K_n\gamma_n\leq1$,
Lemma~\ref{lem:deviation} shows that,  for any $f \in \mathsf{A}_n$,  $2\pi|pf_{\star}-kf|\geq2\pi k\gamma_n$, and $y>0$,
$$
\PP\left(\left|\varphi_{n,X}\{2\pi(pf_{\star}-kf)\}-\PE[\varphi_{n,X}\{2\pi(pf_{\star}-kf)\}]\right|\geq y\right)
\leq 4\rme^{-C n y^2 (1\wedge k\gamma_n)}\leq4\rme^{-C n y^2 k\gamma_n}\;.
$$
Using this bound with the definition of $\tilde{D}_{n,m}(f)$ in~(\ref{eq:tildeD}), we get, for all $x>0$ and
$f\in\mathsf{A}_n$,
\begin{align*}
\PP\left(\sup_{q=1,\dots,Q(\rho)}\tilde{D}_{n,m}(f_q) \geq x\right) \leq Q(\rho)\sup_{f\in\mathsf{A}_n} \PP\left(\tilde{D}_{n,m}(f)\geq x\right)
\leq 4Q(\rho) \sum_{k=1}^{K_n}\sum_{|p|\leq m} \rme^{-C n x\beta_k\alpha_p^2 k \gamma_n} \;,
\end{align*}
where $\beta_k,\,\alpha_p,\,k=1,\dots,K_n,\,|p|\leq m$ are positive weights such that $\sum_k\beta_k=1$ and
$\sum_p\alpha_p|c_p|=1$.
With $\beta_k=k^{-1}/(\sum_{k\leq K_n} k^{-1})\geq 2k^{-1}/\log(K_n)$ for $n$ large enough, and $\alpha_p=\left(\sum_{p}
  |c_p|\right)^{-1}\geq C$, we get
\begin{equation}\label{eq:DevtildeDnm}
\PP\left(\sup_{q=1,\dots,Q(\rho)}\tilde{D}_{n,m}(f_q) \geq x\right) \leq 4Q(\rho)
K_n(2m+1)\rme^{-C n x\gamma_n/\log(K_n)} \; .
\end{equation}
Let $\delta>0$. Defining $\rho_n\eqdef (n^{-2}\gamma^{-1}_nK_n^{-2}\log(K_n))$ and $x= (Q(\rho_n)K_nm_n)^\delta \log(K_n) /(n\gamma_n)$, implying $Q(\rho_n)\to\infty$ and
$Q(\rho_n) K_nm_n\to\infty$, we obtain
$$
\sup_{f\in\mathsf{A}_n}\tilde{D}_{n,m_n}(f) = o_p\left( \{n^2\gamma_nK_n^3m_n\}^\delta \log(K_n) \{n\gamma_n\}^{-1} \right) \; .
$$
For any $\epsilon>0$, we set $\delta>0$ small enough such that $\{n^2K_n^2m_n\}^\delta\log(K_n)=O(\{K_n\,n\,m_n\}^\epsilon)$.
The previous bound, with~(\ref{eq:outballsGamman}),~(\ref{eq:DnDecomp}),~(\ref{eq:tildeDCentrage})
and~(\ref{eq:splitdtildeNQ}) yields the result.
\end{proof}
\begin{lemma}\label{inballs}
Assume (H\ref{assum:periodic-function})--(H\ref{assum:random-design}) and that $s_{\star}$ satisfies~(\ref{eq:Wiener}). Define
$D_n(f)$ by~(\ref{eq:DnDef}), with a sequence $(K_n)$ tending to infinity.
Then, as $n$ tends to infinity, for all relatively prime integers $j$ and $l$,
\begin{equation}\label{eq:upperboundUprim}
D_n(jf_{\star}/l)=\sum_{k=1}^{[K_n/l]} |c_{kj}(s_{\star})|^2+O_p\left(K_n \, l^{1/2}\, n^{-1/2}\right) \;.
\end{equation}
Moreover, for any $\epsilon>0$,
\begin{multline}\label{eq:UnifupperboundUprim}
\sup_{(j,l)\in\mathcal{P}_n}\sup_{f\in B_n(j,l)}
\left|D_n(f) - \sum_{k=1}^{[K_n/l]} |c_{kj}(s_{\star})\varphi_{n,X}\left\{2\pi(kjf_{\star}-klf)\right\}|^2 \right|\\
= O_p\left(K_n\rem(m_n)+ K_n^2n^{-1}+ (K_n\,n\,m_n)^\epsilon K_n^{1/2} n^{-1/2} \right) \; ,
\end{multline}
where $(m_n)$ is a sequence of positive integers, $\rem$ is defined by~(\ref{eq:reste}),
$\mathcal{P}_n$ is the set of indices $(j,l)$ such that $j\geq1$ and $1\leq l\leq K_n$ are relatively prime integers and
$B_n(j,l)$ is defined by~(\ref{eq:bnj_lDef}) with $0<\fmin<\fmax$
$(\gamma_n)$ satisfying
\begin{equation}\label{eq:inballsGamman}
 \gamma_nK_n^2\to0 \; .
\end{equation}
\end{lemma}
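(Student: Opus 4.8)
The plan is to expand $s_{\star}$ in its Fourier series with period $T=1/f_{\star}$ (legitimate under~(\ref{eq:Wiener})) and argue exactly as in the proof of Lemma~\ref{outballs} that
\[
D_n(f)=\sum_{k=1}^{K_n}\Bigl|\,\sum_{p\in\zset}c_p(s_{\star})\,\varphi_{n,X}\{2\pi(pf_{\star}-kf)\}\Bigr|^2 \;.
\]
I would then fix relatively prime integers $j\ge1$, $1\le l\le K_n$ with $B_n(j,l)\ne\emptyset$ and $f\in B_n(j,l)$, writing $f=jf_{\star}/l+\eta$ with $|\eta|\le\gamma_n$, and isolate the \emph{resonant} contributions, which come from indices $k=lk'$ together with $p=k'j$: there $pf_{\star}-kf=-k'l\eta$, so $|2\pi(pf_{\star}-kf)|\le2\pi K_n\gamma_n$, and summing their squares gives exactly the quantity subtracted in~(\ref{eq:UnifupperboundUprim}). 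Every other pair $(k,p)$ gives an argument bounded away from zero: if $l\mid k$ and $p\ne k'j$ then $|2\pi(pf_{\star}-kf)|\ge2\pi(f_{\star}-K_n\gamma_n)\ge\pi f_{\star}$, while if $l\nmid k$, writing $r_k\eqdef\mathrm{dist}(kj,l\zset)\ge1$ (here $l\nmid kj$ since $\gcd(j,l)=1$), $|2\pi(pf_{\star}-kf)|\ge2\pi(f_{\star}r_k/l-K_n\gamma_n)\ge\pi f_{\star}r_k/l$. These last bounds use $K_n\gamma_n\le f_{\star}/(2K_n)$, which is precisely what~(\ref{eq:inballsGamman}) provides for $n$ large. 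This yields the split $D_n(f)=\sum_{k'=1}^{[K_n/l]}|c_{k'j}(s_{\star})\varphi_{n,X}\{2\pi(k'jf_{\star}-k'lf)\}|^2+\mathcal R_n(f)$, where $\mathcal R_n(f)$ collects the cross terms between the resonant and the off-resonant contributions and the squared off-resonant contributions, all involving $\varphi_{n,X}$ only at arguments $t$ with $|t|\ge\pi f_{\star}r_k/l$ (or $\ge\pi f_{\star}$).

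For the pointwise identity~(\ref{eq:upperboundUprim}), I would take $f=jf_{\star}/l$, so that $\varphi_{n,X}(0)=1$ turns the resonant sum into $\sum_{k'=1}^{[K_n/l]}|c_{k'j}(s_{\star})|^2$, and then estimate $\PE[|\mathcal R_n(jf_{\star}/l)|]$. By~(\ref{eq:phinxDerkBounds2}) of Lemma~\ref{moments} with $k=0$, $\PE[|\varphi_{n,X}(t)|^2]\le n^{-1}+Cn^{-1}(n\wedge|t|^{-1})$ is $O(n^{-1})$ for $|t|\ge\pi f_{\star}$ and $O(l/n)$ for $|t|\ge\pi f_{\star}/l$; combining this with the Cauchy--Schwarz inequality and~(\ref{eq:Wiener}) gives $\PE[|\mathcal R_n(jf_{\star}/l)|]\le C(n^{-1/2}+K_nl/n)$, which is $O(K_nl^{1/2}n^{-1/2})$ since $l\le n$. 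Markov's inequality then yields~(\ref{eq:upperboundUprim}).

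For the uniform bound~(\ref{eq:UnifupperboundUprim}), I would follow the Lipschitz-plus-discretization scheme of the proof of Lemma~\ref{outballs}. First truncate every off-resonant Fourier sum at $|p|\le m_n$, at a cost $O(K_n\rem(m_n))$ because each contribution is $O(1)$. Next write every $\varphi_{n,X}$ occurring in the truncated $\mathcal R_n$ as its expectation plus its centered version $\tilde\varphi_{n,X}\eqdef\varphi_{n,X}-\PE[\varphi_{n,X}]$; by~(\ref{eq:phinxDerkBounds1}) the expectations are $O(l/n)$ on the relevant range, and summing these deterministic contributions over the $\le 2K_n$ indices produces the $O(K_n^2 n^{-1})$ term. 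There remains the random quantity $\tilde D_n(f)$, the sum over $k\le K_n$ of the squared centered off-resonant contributions (for $k=lk'$ the term $p=k'j$ being omitted), since the cross terms and squared off-resonant terms of $\mathcal R_n$ are then $O_p(\tilde D_n^{1/2})$ and $O_p(\tilde D_n)$. Here $f\mapsto\tilde\varphi_{n,X}\{2\pi(pf_{\star}-kf)\}$ is Lipschitz with constant $O_p(kn)$ (since $|\varphi_{n,X}'(t)|\le n^{-1}\sum_jX_j=O_p(n)$ and $n^{-1}\sum_j\PE X_j=O(n)$), so $\tilde D_n$ is Lipschitz with constant $O_p(K_n^2 n)$, and its supremum over $B_n(j,l)$ can be replaced, up to a negligible error, by a supremum over a grid of suitably small polynomial mesh. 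On the grid I would apply the exponential deviation inequality of Lemma~\ref{lem:deviation} (using $\inf_{t}|1-\Phi(t)|/(1\wedge|t|)>0$ from~(H\ref{assum:random-design})) together with a union bound over the $O(K_n^2)$ admissible pairs $(j,l)$ (for these, $j=O(l)$ because $jf_{\star}/l$ is within $\gamma_n$ of $[\fmin,\fmax]$) and the polynomially many grid points, distributing the deviation budget over the pairs $(k,p)$ with weights $\beta_k\propto 1/r_k$ on the off-resonant indices (a flat weight on the $[K_n/l]$ indices with $l\mid k$). Since at most $O(K_n/l)$ indices $k\le K_n$ satisfy $r_k=d$ for each $d$, one has $\sum_{l\nmid k}1/r_k=O(K_n\log l/l)$, so this weighting turns the exponent in the deviation bound into $\gtrsim nx/(K_n\log K_n)$; taking $x$ logarithmic in the number of points gives $\sup\tilde D_n=O_p(K_n n^{-1}(K_n n m_n)^\epsilon)$, whence the last term of~(\ref{eq:UnifupperboundUprim}).

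The hard part is this last step: one has to keep careful track of how small the off-resonant arguments $2\pi(pf_{\star}-kf)$ can be as a function of $(j,l)$ and $(k,p)$ — since this controls both the second moments through $n\wedge|t|^{-1}$ in Lemma~\ref{moments} and the exponential rate through $|1-\Phi(t)|$ in Lemma~\ref{lem:deviation} — and to check that the number-theoretic weighting $\beta_k\propto 1/\mathrm{dist}(kj,l\zset)$ makes the aggregated deviation budget sum to the stated powers of $K_n$. Condition~(\ref{eq:inballsGamman}) enters exactly here, guaranteeing $K_n\gamma_n\le f_{\star}/(2l)$ so that the $O(\gamma_n)$ perturbation of $f$ away from $jf_{\star}/l$ never erases the $\ge\pi f_{\star}/l$ gap separating the off-resonant frequencies from the resonant one.
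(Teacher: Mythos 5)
Your proposal is correct and takes essentially the same route as the paper: the same resonant/off-resonant decomposition based on $|pl-kj|\ge 1$ for relatively prime $j,l$, Lemma~\ref{moments} for the pointwise bound~(\ref{eq:upperboundUprim}) and for the centering term giving $O(lK_nn^{-1})=O(K_n^2n^{-1})$, and truncation at $m_n$ plus a Lipschitz discretization plus the deviation inequality of Lemma~\ref{lem:deviation} with a weighted union bound for the uniform statement, with~(\ref{eq:inballsGamman}) guaranteeing that the off-resonant arguments stay bounded away from zero. The only (harmless) difference is technical: the paper bounds the remainder by a first-power off-resonant sum $A_{n,m}$ and uses the uniform gap of order $K_n^{-1}$ exactly as in Lemma~\ref{outballs}, whereas you keep the quadratic form, reduce cross terms by Cauchy--Schwarz, and exploit the finer gap of order $r_k/l$ with weights proportional to $1/r_k$; both variants deliver the stated rates.
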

\begin{proof}
Let  $j$ and $l\leq K_n$ be two relatively prime integers.  In the following, $C$ denotes a positive constant independent of
$j$, $l$ and $f$ that may change upon each appearance.
As in the proof of  Lemma~\ref{outballs}, we use the Fourier expansion~(\ref{fourierCoeff}) of $s_{\star}$ defined with
$T=1/f_{\star}$. Expanding $s_\star$ in~(\ref{eq:DnDef}),
the leading term in $D_{n}(f)$ for $f$ close to $jf_{\star}/l$ will be given by the indices $k$
and $p$ such that $k/l$ and $p/j$ are equal to the same integer, say $q$. Thus we split $D_{n}(f)$ into
\begin{equation}\label{eq:SplitAn}
D_{n}(f)=\sum_{q=1}^{[K_n/l]}\left|c_{qj}(s_{\star})
  \varphi_{n,X}\left\{2\pi(qjf_{\star}-qlf)\right\}\right|^2 + A_n(f) \;,
\end{equation}
where
\begin{equation*}
A_n(f) \eqdef {\sum_{k,p,p'}}' c_p(s_{\star})\overline{c_{p'}(s_{\star})} \varphi_{n,X}\{2\pi(pf_{\star}-kf)\}
\overline{\varphi_{n,X}\{2\pi(p'f_{\star}-kf)\}}
\end{equation*}
with $\sum_{k,p,p'}'$ denoting the sum over indices $k=1,\dots,K_n$ and $p,p'\in\zset$ such that, for
any integer $q$, we have $k\neq ql$, $p\neq jq$ or $p'\neq jq$. It follows from this definition and from~(\ref{eq:Wiener}),
since $|\varphi_{n,X}|\leq1$, that
\begin{equation}\label{eq:AnBound}
|A_n(f)|
\leq  C\,{\sum_{k,p}}'|c_p(s_{\star})\varphi_{n,X}\{2\pi(pf_{\star}-kf)\}| \; ,
\end{equation}
where $\sum_{k,p}'$ denotes the sum over indices $k=1,\dots,K_n$ and $p\in\zset$ such that, for
any integer $q$, we have $k\neq ql$ or $p\neq jq$.
Using that $j$ and $l$ are relatively prime, if for any integer $q$, $k\neq ql$ or $p\neq jq$, then $|pl-kj|\geq1$, which
implies, by~\eqref{eq:phinxDerkBounds2}  with $k=0$ in Lemma \ref{moments},
$$
\PE\left[\left|\varphi_{n,X}\left\{2\pi(pf_{\star}-kf)\right\} \right|\right] \leq
\PE\left[\left|\varphi_{n,X}\left\{2\pi(pf_{\star}-kf)\right\} \right|^2\right]^{1/2}\leq C\, (n^{-1} l)^{1/2}  \; .
$$
Hence, using~(\ref{eq:Wiener}) and this bound in~(\ref{eq:AnBound}), Relation~(\ref{eq:SplitAn}) yields~(\ref{eq:upperboundUprim}).
We now proceed in bounding $A_n(f)$ uniformly for $f\in \cup_{(j,l)\in\mathcal{P}_n}B_n(j,l)$. We use the same line of reasoning as for
bounding $D_n(f)$ in Lemma~\ref{outballs}. First we split the sum in $p$ appearing in~(\ref{eq:AnBound}) and introduce
the centering term $\PE[\varphi_{n,X}\{2\pi(pf_{\star}-kf)\}]$ so that
\begin{equation}\label{eq:AnBoundUnif}
|A_n(f)| \leq C\,\left( A_{n,m}(f) + {\sum_{k,p}}' \left|c_p(s_{\star})\,\PE[\varphi_{n,X}\{2\pi(pf_{\star}-kf)\}]\right|
+ K_n \rem(m) \right) \; ,
\end{equation}
where
$$
A_{n,m}(f) \eqdef  {\sum_{k,p}}'' |c_p(s_{\star})(\varphi_{n,X}\{2\pi(pf_{\star}-kf)\}-\PE[\varphi_{n,X}\{2\pi(pf_{\star}-kf)\}])|  \; ,
$$
with $\sum_{k,p}''$ denoting the sum over indices $k=1,\dots,K_n$ and $p=0,\pm1,\dots,\pm m$ such that $|pl-kj|\geq1$.
Using~(\ref{eq:phinxDerkBounds1}) with $k=0$ in Lemma~\ref{moments} and~(\ref{eq:Wiener}), we have
\begin{equation}\label{eq:AnBoundUnifCentrage}
{\sum_{k,p}}' \left|c_p(s_{\star})\,\PE[\varphi_{n,X}\{2\pi(pf_{\star}-kf)\}]\right| \leq C\, l \, K_n n^{-1} \; .
\end{equation}
As for obtaining~(\ref{eq:splitdtildeNQ}), we cover $[\fmin,\fmax]$ with $Q$ intervals of size $\rho=(\fmax-\fmin)/Q$, and
obtain
$$
\sup_{f\in \cup_{(j,l)\in\mathcal{P}_n}B_n(j,l)}A_{n,m}(f) \leq \sup_{q=1,\dots,Q} A_{n,m}(f_q) + O_p\left(\rho n K_n^2\right) \; ,
$$
where either $A_{n,m}(f_q)=0$, or $f_q\in \bigcup_{j,l}B_n(j,l)$, in which case, for all indices $k$ and $p$ in the
summation term $\sum_{k,p}''$, there exist integers $j$ and $l\leq K_n$ such that
\begin{equation*}
|pf_{\star}-kf_q|\geq |pf_{\star}-kjf_{\star}/l|- \gamma_n k \geq f_{\star}/l - \gamma_nK_n
\geq f_{\star}/K_n - \gamma_nK_n\geq C \, K_n^{-1} \; ,
\end{equation*}
for $n$ large enough, by~(\ref{eq:inballsGamman}).
Now, we apply the deviation estimate in Lemma~\ref{lem:deviation}, so that, as in~(\ref{eq:DevtildeDnm}), we have
$$
\PP\left(\sup_{q=1,\dots,Q} A_{n,m}(f_q) >  x \right) \leq 4 Q K_n (2m+1) \,\rme^{-C n x^2K_n^{-1}} \; .
$$
Let $\delta>0$. Setting $Q=[K_n^{3/2} n^{3/2}]$ and $x=(Q K_n m_n)^\delta K_n^{1/2}\,n^{-1/2}$  so that $Q\to\infty$ and
$Q K_n m_n\to\infty$ as $n\to\infty$,
we finally obtain
$$
\sup_{f\in \bigcup_{j,l}B_n(j,l)}A_{n,m}(f)=O_p\left((Q K_n m_n)^\delta K_n^{1/2}n^{-1/2}\right) \; .
$$
For any $\epsilon>0$, we set $\delta>0$ such that $(Q K_n m_n)^\delta=O((K_n\,n\,m_n)^\epsilon)$.
Applying this bound in~(\ref{eq:AnBoundUnif}) and using~(\ref{eq:AnBoundUnifCentrage}), Relation~(\ref{eq:SplitAn})
yields~(\ref{eq:UnifupperboundUprim}).
\end{proof}
 The following Proposition gives some limit results for additive functionals of a renewal
process.

\begin{prop}\label{Prop:CLT}
Assume (H\ref{assum:random-design}) and (H\ref{assum:strong-spread-out}). Let $g$ be a non-constant locally integrable
$T$-periodic real-valued function defined on $\rset$.
Assume that the Fourier coefficients of $g$ defined by~(\ref{fourierCoeff}) satisfy
$c_0(g)=0$ and $\sum_{k\in\zset} |c_k(g)| < \infty$ then for any non-negative
integer $k$
\begin{equation}\label{llnWeightMC}
\frac{1}{n^{k+1}}\sum_{j=1}^n j^k g(X_j)=O_p(n^{-1/2})\;.
\end{equation}
Denote by $s_n(t)$ the piecewise linear interpolation
$$
s_n(t) =  \sum_{k=1}^{[nt]} g(X_k) + (nt-[nt]) g(X_{[nt]+1}) ,\, t\geq0  \eqsp,
$$
where $[x]$ denotes the integer part of $x$. Then, as $n\to \infty$,
\begin{equation}\label{eq:gammaG}
(n\gamma_g^2)^{-1/2} s_n(t) \Rightarrow B(t) \;,\quad\text{where}\quad
\gamma_g^2 \eqdef \sum_{k\in\zset\backslash\{0\}}  \left|c_k(g)\right|^2 \,
\frac{1- \left| \Phi(2\pi k/T)\right|^2}{\left|1-\Phi(2\pi k/T)\right|^{2}}
\end{equation}
is positive and finite,
$\Rightarrow$ denotes the weak convergence in the space of continuous $[0,1]\to\rset$ functions endowed with the
uniform norm and $B(t)$ is the standard Brownian motion on $t\in[0,1]$.
\end{prop}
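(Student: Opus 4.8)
The plan is to reduce everything to single‑frequency exponential sums by expanding $g$ in Fourier series. Since $\sum_k|c_k(g)|<\infty$, the series $g(t)=\sum_{p\in\zset}c_p(g)\rme^{\rmi\theta_pt}$, with $\theta_p\eqdef2\pi p/T$, converges absolutely and uniformly, so $g$ agrees with a continuous bounded function and all sums below may be manipulated termwise; moreover $c_0(g)=0$ leaves only the frequencies $p\neq0$. For each such frequency I use the martingale decomposition already obtained in the proof of Lemma~\ref{lem:deviation}: with $\Pi_q(\theta)\eqdef\bigl(\prod_{k=1}^{q-1}\rme^{\rmi\theta V_k}\bigr)(\rme^{\rmi\theta V_q}-\Phi(\theta))$ one has $\rme^{\rmi\theta X_j}=\Phi(\theta)^j+\sum_{q=1}^j\Phi(\theta)^{j-q}\Pi_q(\theta)$, where $\{\Pi_q(\theta)\}_{q\geq1}$ is a martingale difference sequence adapted to $\mathcal{F}_q=\sigma(V_1,\dots,V_q)$ with $|\Pi_q(\theta)|\leq2$ and $\PE^{\mathcal{F}_{q-1}}[|\Pi_q(\theta)|^2]=1-|\Phi(\theta)|^2$. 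By~(H\ref{assum:random-design}), $c_\star\eqdef\sup_{|t|\geq2\pi/T}|\Phi(t)|<1$, so for every $p\neq0$ the geometric weights $\Phi(\theta_p)^m$ are uniformly ($p$‑independently) summable, and every sum $\sum_jj^k\rme^{\rmi\theta_pX_j}$ is, up to a deterministic $O(1)$ error, a deterministically weighted sum of the $\Pi_q(\theta_p)$.

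For~(\ref{llnWeightMC}), write $n^{-(k+1)}\sum_{j=1}^nj^kg(X_j)=\sum_{p\neq0}c_p(g)\,n^{-(k+1)}\sum_{j=1}^nj^k\rme^{\rmi\theta_pX_j}$. The contribution of $\sum_jj^k\Phi(\theta_p)^j$ is bounded by $\sum_{j\geq1}j^kc_\star^j<\infty$, hence $O(n^{-(k+1)})$; the remaining contribution is $n^{-(k+1)}\sum_{q=1}^nw_{n,q}(\theta_p)\Pi_q(\theta_p)$ with deterministic weights $w_{n,q}(\theta_p)\eqdef\sum_{j=q}^nj^k\Phi(\theta_p)^{j-q}$, which satisfy $|w_{n,q}(\theta_p)|\leq\sum_{m\geq0}(q+m)^kc_\star^m\leq C(1+q^k)$. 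By orthogonality of martingale differences, the $L^2$‑norm of this weighted sum equals $\bigl(\sum_{q=1}^n|w_{n,q}(\theta_p)|^2(1-|\Phi(\theta_p)|^2)\bigr)^{1/2}\leq Cn^{k+1/2}$ with $C$ independent of $p$; dividing by $n^{k+1}$ and summing against $\sum_{p\neq0}|c_p(g)|<\infty$ gives $n^{-(k+1)}\sum_{j=1}^nj^kg(X_j)=O_p(n^{-1/2})$ (indeed in $L^2$).

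For~(\ref{eq:gammaG}) I truncate: $g=g_M+r_M$ with $g_M\eqdef\sum_{0<|p|\leq M}c_p(g)\rme^{\rmi\theta_p\cdot}$ and $\rho_M\eqdef\sum_{|p|>M}|c_p(g)|\to0$. For the tail, Doob's $L^2$‑maximal inequality applied to the martingale $N\mapsto\sum_{q\leq N}\Pi_q(\theta_p)$ (whose $L^2$‑norm is $\leq n^{1/2}$) together with the deterministic bound $\sup_N|\sum_{q\leq N}\Phi(\theta_p)^{N-q+1}\Pi_q(\theta_p)|\leq2c_\star/(1-c_\star)$ (an elementary recursion using $|\Pi_q|\leq2$) gives $\PE\bigl[\sup_{t\in[0,1]}|n^{-1/2}\sum_{k\leq[nt]}r_M(X_k)|\bigr]\leq C\rho_M$, uniformly in $n$. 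For the polynomial part, the same decomposition yields, uniformly in $t$, $n^{-1/2}\sum_{k\leq[nt]}g_M(X_k)=n^{-1/2}\sum_{q\leq[nt]}\eta_q+O(n^{-1/2})$, where $\eta_q\eqdef\sum_{0<|p|\leq M}a_p\Pi_q(\theta_p)$, $a_p\eqdef c_p(g)/(1-\Phi(\theta_p))$, is a uniformly bounded, real‑valued (since $a_{-p}=\overline{a_p}$) martingale difference sequence. I then apply the functional central limit theorem for martingale difference arrays to the partial‑sum process $t\mapsto n^{-1/2}\sum_{q\leq[nt]}\eta_q$. The Lindeberg condition holds trivially by boundedness; for the predictable quadratic variation, a direct computation gives $\PE^{\mathcal{F}_{q-1}}[\eta_q^2]=\sum_{0<|p|\leq M}|a_p|^2(1-|\Phi(\theta_p)|^2)+\sum_{0<|r|\leq2M}b_r\,\rme^{\rmi\theta_rX_{q-1}}$ for suitable constants $b_r$, so $n^{-1}\sum_{q\leq[nt]}\PE^{\mathcal{F}_{q-1}}[\eta_q^2]\to t\,\gamma_{g_M}^2$ in probability, the oscillatory terms vanishing by~(\ref{llnWeightMC}) applied to $\cos(\theta_r\cdot)$ and $\sin(\theta_r\cdot)$, and $\gamma_{g_M}^2\eqdef\sum_{0<|p|\leq M}|c_p(g)|^2\frac{1-|\Phi(\theta_p)|^2}{|1-\Phi(\theta_p)|^2}\uparrow\gamma_g^2$. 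Hence $t\mapsto n^{-1/2}\sum_{q\leq[nt]}\eta_q$ converges weakly to $\gamma_{g_M}B$. Noting that $0<\gamma_g^2<\infty$ (finiteness since each summand is $\leq|c_p(g)|^2/(1-c_\star)^2$; positivity since $g$ is non‑constant and $|\Phi(\theta_p)|<1$), letting $M\to\infty$ with the uniform tail bound in the standard approximation theorem for weak convergence, and absorbing the piecewise‑linear interpolation correction (bounded by $n^{-1/2}\|g\|_\infty$), yields $(n\gamma_g^2)^{-1/2}s_n(t)\Rightarrow B(t)$.

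The main obstacle is the convergence of the predictable quadratic variation in the martingale functional CLT: one must show that $n^{-1}\sum_{q\leq[nt]}\PE^{\mathcal{F}_{q-1}}[\eta_q^2]$ has the deterministic limit $t\,\gamma_{g_M}^2$ even though its cross‑frequency terms $\rme^{\rmi\theta_rX_{q-1}}$ ($r\neq0$) are genuinely random — this is exactly where the renewal structure is used, through~(\ref{llnWeightMC}) and hence the Cram\'er condition in (H\ref{assum:random-design}), and it is what produces the non‑classical variance $\gamma_g^2$. A secondary but essential technical point is to keep all per‑frequency estimates uniform in $p$ (so they may be summed against $\sum_p|c_p(g)|$) and uniform in $n$ (so the limits $M\to\infty$ and $n\to\infty$ may be interchanged in the tail estimate).
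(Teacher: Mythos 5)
Your proof is correct, but it follows a genuinely different route from the paper. The paper treats $\{X_k \bmod T\}$ as a Markov chain on the circle: it uses (H\ref{assum:strong-spread-out}) to verify a uniform Doeblin condition, identifies the uniform distribution as the invariant law, solves the Poisson equation explicitly in Fourier form ($\tilde g(x)=\sum_{k\neq0}c_k(g)(1-\Phi(2\pi k/T))^{-1}\rme^{2\rmi\pi kx/T}$), derives \eqref{llnWeightMC} by the associated martingale decomposition plus summation by parts, and obtains \eqref{eq:gammaG} from the functional CLT for geometrically ergodic Markov chains (Meyn--Tweedie), with $\gamma_g^2$ computed via Parseval. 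You instead work frequency by frequency with the elementary decomposition $\rme^{\rmi\theta X_j}=\Phi(\theta)^j+\sum_{q\le j}\Phi(\theta)^{j-q}\Pi_q(\theta)$ already present in the proof of Lemma~\ref{lem:deviation}, prove \eqref{llnWeightMC} by an $L^2$ bound with constants uniform in the frequency (thanks to the Cram\'er condition in (H\ref{assum:random-design})), and prove \eqref{eq:gammaG} by truncating the Fourier series, applying a martingale functional CLT to $\eta_q=\sum_{0<|p|\le M}c_p(g)(1-\Phi(\theta_p))^{-1}\Pi_q(\theta_p)$ — where the oscillatory cross terms in the predictable quadratic variation are killed precisely by \eqref{llnWeightMC} — and removing the truncation via a Doob-maximal tail bound uniform in $n$. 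The trade-off: your argument is more self-contained (no Harris recurrence or external Markov-chain FCLT), it yields \eqref{llnWeightMC} in $L^2$ rather than merely in probability, and it never uses (H\ref{assum:strong-spread-out}) — the Cram\'er condition together with $\sum_k|c_k(g)|<\infty$ suffices, so you in fact prove the proposition under weaker hypotheses; the paper's route avoids your truncation/interchange-of-limits step and packages the variance computation cleanly through the Poisson equation, at the price of invoking the heavier ergodic-theoretic machinery and the extra assumption. All the delicate points in your write-up (uniformity in $p$ of $|1-\Phi(\theta_p)|^{-1}\le(1-c_\star)^{-1}$, realness of $\eta_q$ via $a_{-p}=\overline{a_p}$, the deterministic bound on $\sum_{q\le N}\Phi^{N-q+1}\Pi_q$, positivity and finiteness of $\gamma_g^2$, and the negligible interpolation correction) check out, so I see no gap.
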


\begin{proof}
Without loss of generality we set $T=1$ in this proof section.
Define the Markov chain $\{Y_k\}_{k\geq0}$, valued in $[0,1)$ and started at $x\in[0,1]$ by $Y_0=x$
and $Y_{k+1}=Y_k+V_{k+1}-[Y_k+V_{k+1}]$, $k\geq0$.
Observe that, with the initial value $x=0$, we have $g(Y_k)=g(X_k)$ for all $k\geq1$.
Let us show that this Markov Chain is positive Harris and that its invariant probability  is the uniform distribution on
$[0,1]$. We first prove that this chain is uniformly Doeblin, for a definition see
\cite{cappe:moulines:ryden:2005}.
By (H\ref{assum:strong-spread-out}), there exists a non-negative and bounded function $h$ such that
$0<\int_0^\infty h(t)dt<\infty$  and for all Borel set $A$, $\PP(V\in A)\geq \int_A h(t) \, dt$.
It follows that, for any $k\geq1$, $\PP(X_k\in A)\geq \int_A h^{\ast k}(t) \, dt$,
where $h^{\ast k}=h\ast \dots \ast h$ ($k$ times) with $\ast$ denoting the convolution. Observe that the properties of $h$
imply that $h^{\ast 2}$ is non-negative, continuous and non--identically zero.
It follows that there exists $0\leq a<b$ and $\delta>0$ such that $\int_{t \in [a,b]} h^{\ast 2}(t)\geq\delta$. Hence, for
$k$ large enough, there exists a non-negative integer $l$ and $\epsilon>0$ such that
$h^{\ast (2k)}(t)=(h^{\ast 2})^{\ast k}(t)\geq\epsilon$ for
all $t\in[l,l+1]$. Hence, for all $x\in[0,1)$ and all Borel set $A\subset[0,1]$,
\begin{equation*}
\PP_x(Y_{2k}\in A) \geq  \PP_x(Y_{2k}\in A, X_{2k}\in[l,l+1)) \
\end{equation*}
which is the uniform Doeblin condition. This implies that $Y$ is a uniformly geometrically
ergodic Markov chain; let us
compute its invariant probability distribution, denoted by $\pi$. For all $x\in[0,1]$ and $l\in\zset$, $l\neq0$, we have
$
\PE_x[\exp(2\rmi\pi lY_n)]=\exp(2\rmi\pi x) \left(\Phi(2\pi l)\right)^n \to 0 \;,
$
where we used (H\ref{assum:random-design}) which is implied by (H\ref{assum:strong-spread-out}).
Hence, for all $l\in\zset$, $l\neq0$,
$ \int_{t=0}^1 \exp(2\rmi\pi l t) \pi(dt) = 0$, which implies that $\pi$ is the uniform distribution on $[0,1]$.
Define
$$
\tilde{g}(x)=\sum_{k\in\zset\backslash\{0\}} c_k(g) (1-\Phi(2\pi k))^{-1} \rme^{2\rmi\pi kx} \; .
$$
By (H\ref{assum:random-design}), $(1-\Phi(2\pi k))^{-1}$ is bounded uniformly on $k\in\zset\backslash\{0\}$.
Hence $\gamma_g$ is positive and finite.
Moreover,
$\sum_{k\in\zset\backslash\{0\}} |c_k(g) (1-\Phi(2\pi k))^{-1}| <\infty$ and we compute
$$
\PE_x[\tilde{g}(Y_1)]= \sum_{k\in\zset\backslash\{0\}} c_k(g)\frac{\Phi(2\pi k)}{(1-\Phi(2\pi k))}\exp(2\rmi\pi k x).
$$
This yields that $\tilde{g}$ is the solution of the Poisson equation
$\tilde{g}(x)-E_x[\tilde{g}(Y_1)]=g(x) - \int_0^1 g(t) dt$.
We now prove \eqref{llnWeightMC}. Note that, since $\pi(g)=0$,
\begin{multline*}
n^{-(k+1)}\sum_{j=1}^n j^k g(X_j)
=n^{-(k+1)}\sum_{j=1}^n j^k \left(\tilde{g}(X_j)-P\tilde{g}(X_j)\right)\\
=n^{-(k+1)}\sum_{j=1}^n j^k \left(\tilde{g}(X_j)-P\tilde{g}(X_{j-1})\right)
+n^{-(k+1)}\sum_{j=1}^n j^k
\left(P\tilde{g}(X_{j-1})-P\tilde{g}(X_{j})\right)\;.
\end{multline*}
Since $\tilde{g}$ is bounded, the variance of the first term is $O(n^{-1})$
as $n\to\infty$. Integrating by parts yields, using that $\tilde{g}$ is bounded,
$
n^{-(k+1)}\sum_{j=1}^n j^k \left(P\tilde{g}(X_{j-1})-P\tilde{g}(X_{j})\right)
=n^{-(k+1)}P\tilde{g}(X_{0})-n^{-1}P\tilde{g}(X_{n})+n^{-(k+1)}\sum_{j=1}^n \left[(j+1)^k-j^k\right]P\tilde{g}(X_{j})
=O_p(n^{-1}).
$
To prove \eqref{eq:gammaG} we compute, by the Parseval Theorem,
\begin{multline*}
\int_{0}^1 \left\{\tilde{g}^2(x)- (\PE_x[\tilde{g}(Y_1)])^2\right\} \, dx  \\
 = \sum_{k\in\zset\backslash\{0\}}
\left\{ \left|c_k(g) (1-\Phi(2\pi k))^{-1}\right|^2 -  \left| c_k(g) \Phi(2\pi k) (1-\Phi(2\pi k))^{-1}\right|^2
\right\}
=\gamma_g^2 \; ,
\end{multline*}
The end of the proof follows from the functional central limit theorem \cite[Theorem 17.4.4]{meyn:tweedie:1993}.
\end{proof}

\subsection{Proof of  \eqref{eq:weakCons}}\label{sec:proof-eqrefWeakCons}

Let $\alpha >0$ arbitrary small and denote by $\{\gamma_n\}$ the sequence
\begin{equation}\label{eq:gamma_nTheo}
\gamma_n=n^{-1+\alpha} \; .
\end{equation}
Since $\ell$ is the unique integer satisfying~(\ref{eq:FminFmax}), for $n$ large enough, we have
$B_n(1,l)\cap[\fmin,\fmax]=\emptyset$ for all $l\neq\ell$, where $B_n(1,l)$ is defined by \eqref{eq:bnj_lDef}.
Hence, for $n$ large enough, $\PP(\hat{f}_n\notin B_n(1,\ell))\leq P_1+P_2$, where
$$
P_1=\PP\left(\sup\limits_{f\in\;\bigcap_{j,l} B_n^c(j,l)}\Lambda_n(f)\geq {\Lambda}_n(f_{0}/\ell)\right)
\text{ and }
P_2 = \PP\left(\sup\limits_{f\in\;\bigcup'_{j,l} B_n(j,l)}\Lambda_n(f)
\geq \Lambda_n(f_{0}/\ell)\right),
$$
where $\bigcap_{j,l}$ is the same as in Lemma~\ref{outballs} and
$\bigcup'_{j,l}$ the union over all $j\geq2$
and $l=1,\dots,K_n$ such that $j$ and $l$ are relatively prime. To show \eqref{eq:weakCons}, we thus need to show that
$P_1,P_2\to0$ as $n\to\infty$. Note that
\begin{equation}\label{eq:P1}
P_1\leq\PP\left(\sup_{f \in\;\bigcap_{j,l} B_n^c(j,l)} D_n(f)+2\sup_{f\in[\fmin,\fmax]} |\xi_n(f)+\zeta_n(f)|
\geq  D_n(f_{0}/\ell)\right).
\end{equation}
By~(\ref{eq:KnCondCons}), applying Lemma~\ref{eta}, we get
\begin{equation}\label{eq:SupEta}
\sup_{f\in[\fmin,\fmax]} |\xi_n(f)+\zeta_n(f)|=o_p(n^{-\beta/2}) \; .
\end{equation}
We now apply Lemma~\ref{outballs}.
Using~(\ref{eq:KnCondCons}) again and choosing $\alpha$ small enough in~(\ref{eq:gamma_nTheo}), we have $K_n\gamma_n\to0$
and, since $n\gamma_n\to\infty$ and $K_n\to\infty$, Condition~(\ref{eq:outballsGamman}) holds.
By~(\ref{eq:KnCondCons}) we have $K_n(n^{-1/2+\beta}+\rem(n^\beta)^2)\to0$ and, by~(\ref{eq:gamma_nTheo}), taking
$m_n=n^\beta$ and $\epsilon$ small enough in Lemma~\ref{outballs}, we obtain
$
\sup_{f \in\;\bigcap_{j,l} B_n^c(j,l)} D_n(f)=o_p(1).
$
The last two displays show that the left-hand side of the inequality in~(\ref{eq:P1}) converges to zero
in probability. Concerning its right-hand side $D_n(f_{0}/\ell)$,
Relation~(\ref{eq:upperboundUprim}) with $j=1$ and $l=\ell$ in Lemma~\ref{inballs} shows that, as $n\to\infty$,
 \begin{equation}\label{eq:RHSP1}
D_n(f_{0}/\ell) \stackrel{p}{\longrightarrow} \sum_{k\geq1}|c_k(s_{\star})|^2>0 \; .
\end{equation}
Hence $P_1\to0$. As in~(\ref{eq:P1}), we have
$$
P_2\leq \PP\left(\sup_{f\in \bigcup'_{j,l}B_n(j,l)}D_n(f)+ 2 \sup_{f\in[\fmin,\fmax]} |\xi_n(f)+\zeta_n(f)|
\geq D_n(f_{0}/\ell) \right) \; .
$$
To prove that $P_2\to0$, we use the following classical inequality, see~\cite{golubev:1988}
or \cite{gassiat:levyleduc:2006},
\begin{equation}\label{eq:S1}
\sup_{j\geq2} \sum_{k=1}^{\infty} |c_{kj}(s_{\star})|^2 < \sum_{k=1}^{\infty} |c_{k}(s_{\star})|^2 \; ,
\end{equation}
which directly follows from the fact that $f_{0}$ is the maximal
frequency of $s_{\star}$.
Now, we apply Lemma~\ref{inballs}. Using~(\ref{eq:KnCondCons}) , Condition \eqref{eq:inballsGamman} holds by choosing
$\alpha$ small enough in~(\ref{eq:gamma_nTheo}).
By~(\ref{eq:KnCondCons}), we have $K_n(n^{-1/2+\beta}+\rem(n^\beta))\to0$ and,
by~(\ref{eq:gamma_nTheo}), taking
$m_n=n^\delta$ and $\epsilon$ small enough in~(\ref{eq:UnifupperboundUprim}), we obtain, using~(\ref{eq:S1})
and~(\ref{eq:RHSP1}), that $P_2\to0$, which concludes the proof.

\subsection{Proof of Eq.~\eqref{eq:ConvPS}}\label{sec:proof-equat-eqrefConvPS}

Let us first prove that, for any $\epsilon>0$,
\begin{equation}\label{eq:Fejer}
\sup_{|t|\leq n^{-1/2-\epsilon}} \left||\varphi_{n,X}(t)|^2-\frac1n F_n(\mu t)\right| = o_p(1) \;,
\end{equation}
where $\mu=\PE[V_1]$, $F_n(x) \eqdef \frac1n\left|\sum_{k=1}^n\rme^{\rmi kt}\right|^2$ is the Fejer kernel and
$\varphi_{n,X}$ is defined in~(\ref{eq:phinxDef}). Indeed, using a standard Lipschitz argument and
(H\ref{assum:random-design}) with the assumption $\PE[V_1^2]<\infty$,
$$
\PE\left[\sup_{|t|\leq n^{-1/2-\epsilon}} \left||\varphi_{n,X}(t)|^2-\frac1n F_n(\mu t)\right|\right]
\leq 2 n^{-1/2-\epsilon} \frac1n\sum_{k=1}^n\PE[|X_k- k \mu|] \leq 2 \sqrt{\Var(V_1)} n^{-\epsilon} \; ,
$$
which gives~(\ref{eq:Fejer}).
Now, by definition of $\hat{f}_n$, we have
$
0\leq \Lambda_n(\hat{f}_n)-\Lambda_n(f_{0}/\ell).
$
Beside, we have, using~(\ref{eq:SupEta}),
$$
\Lambda_n(\hat{f}_n)-\Lambda_n(f_{0}/\ell) \leq D_n(\hat{f}_n)-D_n(f_{0}/\ell)+2\sup_{f\in[\fmin,\fmax]}|\xi_n(f)+\zeta_n(f)|
=D_n(\hat{f}_n)-D_n(f_{0}/\ell)+o_p(1)
$$
and, since the event $\{\hat{f}_n\in B_n(1,\ell)\}$ has probability tending to one,
Lemma~\ref{inballs} yields, for $\alpha$ small enough
in~(\ref{eq:gamma_nTheo}),
$
D_n(\hat{f}_n)-D_n(f_{0}/\ell)\leq
\sum_{k=1}^{K_n}|c_k(s_{\star})|^2 [|\varphi_{n,X}\{2\pi k(f_{0}-\ell\hat{f}_n)\}|^2-1] +
o_p(1).
$
Hence, since for $\alpha$ small enough $K_n\gamma_n\leq n^{-1/2-\alpha/2}$, the last three displayed equations
and~(\ref{eq:Fejer}) finally yield that,
$
0\leq \sum_{k=1}^{K_n}|c_k(s_{\star})|^2 \left[\frac1n F_n\{2\pi \mu k(f_{0}-\ell\hat{f}_n)\}-1\right] +o_p(1).
$
We conclude the proof like in~\cite[Theorem 1, P. 68]{quinn:thomson:1991} by observing that, for any $c>0$,
$
\limsup_{n\to\infty} \sup_{|t|>c/n} \frac1nF_n(t) < 1\; .
$

\subsection{Proof of Eq.~\eqref{lambprim}}\label{sec:proof-eq.-eqrefllambprim}

We use that $\dot{\Lambda}_n(f_{0})=\dot{\xi}_n(f_{0})+\dot{\zeta}_n(f_{0})+\dot{D}_n(f_{0})$ so that~(\ref{lambprim}) follows
from
\begin{align}
\label{eq:XiPrime}
\dot{\xi}_n(f_{0}) & = \frac1{f_{0}}\sum_{j=1}^n \dot{s}_{\star}(X_j)
\left(\frac{X_j}{n}-\frac{\mu}{2}\right)\varepsilon_j  + o_p(\sqrt{n}) \; ,\\
\label{ZetaPrime}
\dot{\zeta}_n(f_{0}) & =o_p(\sqrt{n}) \; ,\\\label{DePrime}
\dot{D}_n(f_{0})& =\frac{1}{f_{0}}\sum_{j=1}^n\left(\frac{X_j}{n}-\frac{\mu}{2}\right)\dot{s}_{\star}(X_j)
s_{\star}(X_j)+o_{p}(\sqrt{n}) \; ,
\end{align}
which we now prove successively. Differentiating~\eqref{eq:XinDef}, we obtain
$\dot{\xi}_n(f_{0})=n^{-1}\sum_{j=1}^n A_n(j)\varepsilon_j$
where
$$
A_n(j)\eqdef n^{-1}\sum_{j'=1}^n\sum_{|k|\leq K_n} 2\rmi\pi k(X_{j}-X_{j'})\rme^{2\rmi\pi k(X_{j}-X_{j'})f_{0}}
s_{\star}(X_{j'}).
$$
In this proof section, we use the Fourier expansion~(\ref{fourierCoeff}) defined with $T=1/f_{0}$.
Expanding $s_{\star}(X_{j'})$ and using the definition of $\varphi_{n,X}$ in \eqref{eq:phinxDef}, we
obtain for any $j=1,\dots,n$,
$
A_n(j)=\sum_{|k|\leq K_n}\rme^{2\rmi\pi k X_{j} f_{0}} \sum_{p\in\zset}  c_p(s_{\star})
(2\rmi\pi k)\left\{ X_{j}\ \varphi_{n,X}[2\pi(p-k)f_{0}]
+\rmi \dot{\varphi}_{n,X}[2\pi(p-k)f_{0}]\right\}.
$
In the sequel, we denote  $\overline{X_n}\eqdef n^{-1}\sum_{j=1}^n X_j$ and $\|Y\|_2=\PE(|Y|^2)^{1/2}$. By Minkowski's
inequality,
$
\dot{\xi}_n(f_{0})-(n f_{0})^{-1}\sum_{j=1}^n \dot{s}_{\star}(X_j)
\left(X_j-n\mu/2\right)\varepsilon_j=O_p\left(n^{-1}
\sum_{k=1}^3\left\{\sum_{j=1}^n
\|A_{n,k}(j)\|_{2}^2\right\}^{1/2}\right),
$
where
\begin{align*}
&A_{n,1}(j)=-f_{0}^{-1} \dot{s}_{\star}(X_j)(\overline{X_n}-n\mu/2) \eqsp, \\
&A_{n,2}(j)=-\sum_{|k|>K_n} (2\rmi\pi k) c_k(s_{\star})\rme^{2\rmi\pi k f_{0} X_j} (X_j-\overline{X_n}) \eqsp, \\
&A_{n,3}(j)=\sum_{|k|\leq K_n} \sum_{p\neq k}(2\rmi\pi k)  c_p(s_{\star}) \rme^{2\rmi\pi k X_j f_{0}}\left(X_j\ \varphi_{n,X}[2\pi(p-k)f_{0}] +\rmi\dot{\varphi}_{n,X}[2\pi(p-k)f_{0}]\right) \eqsp.
\end{align*}
Note that for all $j=1,\dots,n$, $\|A_{n,1}(j)\|_2^2\leq (\sum_{p\in\mathbb{Z}} |k|
|c_k(s_{\star})|)^2 \PE\{(\overline{X_n}-n\mu/2)^2\}=O(n)$ and
$n^{-1}(\sum_{j=1}^n \|A_{n,2}(j)\|_2^2)^{1/2}\leq C n^{1/2}
(\sum_{|k|\geq K_n} |k| |c_k(s_{\star})|)=o(\sqrt{n})$, using
\eqref{eq:sobolev}.
Using Minkowski's inequality, we obtain, for all $j=1,\dots,n$,
$$\|A_{n,3}(j)\|_2\leq 2\pi \sum_{|k|\leq K_n}\sum_{p\neq
  k} |k|
|c_p(s_{\star})|\left(\|X_j\varphi_{n,X}\{2\pi(p-k)f_{0}\}\|_2
+\|\dot{\varphi}_{n,X}\{2\pi(p-k)f_{0}\}\|_2\right)\; .
$$
Using that $|\varphi_{n,X}|\leq1$, Lemma~\ref{moments} which gives
$\PE[|\varphi_{n,X}\{2\pi(p-k)f_{0}\} |^2]=O(n^{-1})$ uniformly in
$p\neq k$, we obtain
$\|X_j\varphi_{n,X}\{2\pi(p-k)f_{0}\}\|_2\leq \|X_j-j\mu\|_2 +j\mu
n^{-1/2}=O(jn^{-1/2}+j^{1/2})$. By Lemma~\ref{moments},
$\|\dot{\varphi}_{n,X}\{2\pi(p-k)f_{0}\}\|_2=O(n^{1/2})$ uniformly in
$p\neq k$ leading thus to $n^{-1}(\sum_{j=1}^n \|A_{n,3}(j)\|_2^2)^{1/2}
=O(K_n^2)=o(\sqrt{n})$ by \eqref{eq:KnCond}. This concludes the proof
of \eqref{eq:XiPrime}.

We now prove~(\ref{ZetaPrime}). Using~(\ref{eq:zetaQ}) and~(\ref{eq:quadBoundcondX}) with $q=1$, we get
$$
\|\dot{\zeta}_n(f_{0})\|_2 \leq 2\pi n^{-2} \sum_{k=1}^{K_n}k
\|\varepsilon^T \Gamma_{q}(X,f)\varepsilon\|_2 =O\left(K_n^2\right)=o(\sqrt{n})
$$
by~(\ref{eq:KnCond}). Hence~(\ref{ZetaPrime}).

Let us now prove~(\ref{DePrime}). Using~(\ref{eq:DnDef}), we get
\begin{multline}\label{eq:DnPrime}
\dot{D}_n(f_{0})= \sum_{k=1}^{K_n}\sum_{p,q\in\zset}
c_p(s_{\star})\overline{c_q(s_{\star})}(-2\pi
k)\left\{
\dot{\varphi}_{n,X}[2\pi(p-k)f_{0}]\ \overline{\varphi_{n,X}[2\pi(q-k)f_{0}]}\right.\\
\left.+\varphi_{n,X}[2\pi(p-k)f_{0}]\ \overline{\dot{\varphi}_{n,X}[2\pi(q-k)f_{0}]}
\right\} \; .
\end{multline}
Lemma~\ref{moments} gives that there exists a constant $C>0$, such that, for all $p\neq k$ and $q\neq k$,
$\PE(|\dot{\varphi}_{n,X}[2\pi(p-k)f_{0}]\ \overline{\varphi_{n,X}[2\pi(q-k)f_{0}]}|)
\leq \left\|\dot{\varphi}_{n,X}[2\pi(p-k)f_{0}]\right\|_2
\left\|\varphi_{n,X}[2\pi(q-k)f_{0}]\right\|_2
\leq C.$
Using \eqref{eq:KnCond} and $\sum_p|c_p(s_{\star})|<\infty$, we get that the term $\sum_k\sum_{p\neq k,q\neq k}$ in the right-hand
side of~(\ref{eq:DnPrime}) is $o_p(\sqrt{n})$. Now, if $p=q=k$, the term in the curly brackets is equal to
zero. Hence~(\ref{eq:DnPrime}) can be rewritten as
$\dot{D}_n(f_{0})=\sum_{k=1}^{K_n} D_{n,k}+ o_p(\sqrt{n})$
where
\begin{multline*}
D_{n,k}=\sum_{q\in\zset}
c_k(s_{\star})\overline{c_q(s_{\star})}(-2\pi k)\left\{\dot{\varphi}_{n,X}(0)\ \overline{\varphi_{n,X}[2\pi(q-k)f_{0}]}
+\overline{\dot{\varphi}_{n,X}[2\pi(q-k)f_{0}]}\right\} \\
+ \sum_{p\in\zset}c_p(s_{\star})\overline{c_k(s_{\star})}(-2\pi k)
\left\{
\dot{\varphi}_{n,X}[2\pi(p-k)f_{0}]+
\varphi_{n,X}[2\pi(p-k)f_{0}]\overline{\dot{\varphi}_{n,X}(0)}\right\}\; .
\end{multline*}
We will check that
 $\sum_{k>K_n}D_{n,k}=o_p(\sqrt{n})$.
Using the Fourier expansion of ${s_{\star}}$ and $\dot{s}_{\star}$, we obtain after some algebra,
$\dot{D}_n(f_{0})=(n f_{0})^{-1}\sum_{j=1}^n \left(X_j -\overline{X_n}\right) \dot{s}_{\star}(X_j)
s_{\star}(X_j)+o_{p}(\sqrt{n})$.
This yields~\eqref{DePrime} by Slutsky's Lemma. Indeed, $\mu/2-\sum_{l=1}^n X_l/n^2=o_p(1)$ and, by Proposition~\ref{Prop:CLT}, $n^{-1/2}\sum_{j=1}^n
(s_{\star} \dot{s}_{\star})(X_j)=O_p(1)$, thus we have
$
\left(\mu/2-n^{-2}\sum_{l=1}^n X_l\right)\sum_{j=1}^n\dot{s}_{\star}(X_j)s_{\star}(X_j)=o_p(\sqrt{n}).
$
To conclude the proof of~(\ref{lambprim}), we have to prove that
 $\sum_{k>K_n}D_{n,k}=o_p(\sqrt{n})$. By Minknowski
inequality,
$
\left\|\sum_{k>K_n} D_{n,k}\right\|_2\leq 2\pi\sum_{k>K_n} \sum_{q\neq k} |k|
|c_k(s_{\star})| |c_q(s_{\star})|\\(\|\dot{\varphi}_{n,X}(0)\overline{\varphi_{n,X}[2\pi(q-k)f_{0}]}\|_2
+\|\dot{\varphi}_{n,X}[2\pi(q-k)f_{0}]\|_2).
$
Using that $|\varphi_{n,X}|\leq 1$ and
$\|\varphi_{n,X}[2\pi(q-k)f_{0}]\|_2=O(n^{-1/2})$ by Lemma
\ref{moments} uniformly in $q\neq k$, we get $\|\dot{\varphi}_{n,X}(0)\overline{\varphi_{n,X}[2\pi(q-k)f_{0}]}\|_2
\leq
\|\overline{X_n}-(n+1)\mu/2\|_2+(n+1)\mu/2\|\varphi_{n,X}[2\pi(q-k)f_{0}]\|_2=O(\sqrt{n})$.
By \eqref{eq:sobolev} and Lemma \ref{moments}, we obtain
$
\left\|\sum_{k>K_n} D_{n,k}\right\|_2=o(\sqrt{n})\; .
$

\subsection{Proof of Eq.~\eqref{lambsec}}\label{sec:proof-eq.-eqrefllambsec}

Using that $\ddot{\Lambda}_n=\ddot{D}_n+\ddot{\xi}_n+\ddot{\zeta}_n$, applying Lemma~\ref{eta} with $q=2$ and
using~(\ref{eq:ConvPS}), the Relation~(\ref{lambsec}) is a consequence of the two following estimates, proved below,
\begin{align}
\label{eq:lambsec1}
&\ddot{D}_n(f_{0})=-n^2\mu^2(12\;f_{0})^{-1}\int_0^{1/f_0} \dot{s}_{\star}^2(t)dt \; (1+o_{p}(1)) \;,\\
\label{eq:lambsec2}
&\sup_{f:|f-f_{0}|\leq\rho_n/n}|\ddot{D}_n(f_{0})-\ddot{D}_n(f)|=o_p(n^2) \;,
\end{align}
for any decreasing sequence $(\rho_n)$ tending to zero.
In this proof section, we use the Fourier expansion~(\ref{fourierCoeff}) defined with $T=1/f_{0}$.
We now prove~(\ref{eq:lambsec1}). Using~(\ref{eq:DnDef}), we obtain
\begin{multline}\label{eq:DnSec}
\ddot{D}_n(f)=4\pi^2 \sum_{k=1}^{K_n}\sum_{p,q\in\zset} c_p(s_{\star})\overline{c_q(s_{\star})} k^2
\left\{
\ddot{\varphi}_{n,X}[2\pi(pf_{0}-kf)]\overline{\varphi_{n,X}[2\pi(qf_{0}-kf)]}\right.\\
\left.+2\dot{\varphi}_{n,X}[2\pi(pf_{0}-kf)]\overline{\dot{\varphi}_{n,X}[2\pi(qf_{0}-kf)]}
+\varphi_{n,X}[2\pi(pf_{0}-kf)]\overline{\ddot{\varphi}_{n,X}[2\pi(qf_{0}-kf)]}
\right\} \; .
\end{multline}
For $f=f_{0}$,
we get
\begin{equation}\label{eq:DnSecFstar}
\frac1{n^{2}}\ddot{D}_n(f_{0})= \left(4\pi^2 \sum_{k=1}^{K_n} |c_k(s_{\star})|^2 k^2\right)
\left\{-\frac2{n^3}\sum_{j=1}^nX_j^2+\frac2{n^4}\left(\sum_{j=1}^nX_j\right)^2\right\}
+ G_n\;,
\end{equation}
\begin{multline*}
\text{where }
G_n=\frac{4\pi^2}{n^2} \sum_{k=1}^{K_n}\sum_{(p,q)\neq (k,k)} c_p(s_{\star})\overline{c_q(s_{\star})}k^2
\left\{
\ddot{\varphi}_{n,X}[2\pi(p-k)f_{0}]\overline{\varphi_{n,X}[2\pi(q-k)f_{0}]}\right.\\
\left.+2\dot{\varphi}_{n,X}[2\pi(p-k)f_{0}]\overline{\dot{\varphi}_{n,X}[2\pi(q-k)f_{0}]}
+\varphi_{n,X}[2\pi(p-k)f_{0}]\overline{\ddot{\varphi}_{n,X}[2\pi(q-k)f_{0}]}
\right\}.
\end{multline*}
As $n$ tends to infinity, the term between parentheses in~(\ref{eq:DnSecFstar}) tends to
$1/(2f_{0})\int_0^{1/f_{0}}{\dot{s}_{\star}}^2(t)\,dt$ and the term between curly brackets converges to $-2\mu^2/3+\mu^2/2$
in probability, and hence their product converges to the constant appearing in the right-hand side of~(\ref{eq:lambsec1}).
We conclude the proof of~(\ref{eq:lambsec1}) by showing that $G_n=o_p(1)$.
We split the summation $\sum_{p,q}$
in the definition of $G_n$ into three terms $\sum_{p\neq k,q\neq k}+\sum_{p=k,q\neq k}+\sum_{p\neq k,q=k}=:\sum_{i=l}^{3} G_{n,l}$.
Observe that, setting $C=2\pi\sum_p|c_p(s_{\star})|$,
$$
\PE[|G_{n,1}|]\leq C^2 \,K_n^3n^{-2}\,
\inf_{|t|>2\pi f_{0}}\{\PE[|\ddot{\varphi}_{n,X}(t)\varphi_{n,X}(t)|]+\PE[|\dot{\varphi}_{n,X}(t)|^2]\} \;.
$$
Using that $\PE[|\ddot{\varphi}_{n,X}(t)\varphi_{n,X}(t)|]^2\leq \PE[|\ddot{\varphi}_{n,X}(t)|^2]\PE[|\varphi_{n,X}(t)|^2]$, Lemma
\ref{moments} yields $G_{n,1}=o_p(1).$
Note that
\begin{multline*}
\PE[|G_{n,2}+G_{n,3}|]\leq \frac{8\pi^2}{n^2}\sum_{k=1}^{K_n} k^2 |c_k(s_{\star})|\sum_{q\neq k} |c_q|
\left\{\PE\left[|\ddot{\varphi}_{n,X}(0)|^2\right]^{1/2}
\PE\left[|\varphi_{n,X}[2\pi(q-k)f_{0}]|^2\right]^{1/2}\right.\\
\left. +2 \PE\left[|\dot{\varphi}_{n,X}(0)|^2\right]^{1/2}
\PE\left[|\dot{\varphi}_{n,X}[2\pi(q-k)f_{0}]|^2\right]^{1/2}
+\PE\left[|\ddot{\varphi}_{n,X}[2\pi(q-k)f_{0}]|^2\right]^{1/2}\right\}=O(n^{-1/2}),
\end{multline*}
by using that $\PE\left[n^{-2}\left(\sum_{j=1}^n X_j\right)^2\right]=O(n^2)$,
$\PE\left[n^{-2}\left(\sum_{j=1}^n X_j^2\right)^2\right]=O(n^4)$ and Lemma \ref{moments}.

We now prove~(\ref{eq:lambsec2}). In the expression of $\ddot{D}_n(f)$ given by  the right-hand side of~(\ref{eq:DnSec}), we
separate the summation $\sum_{p,q}$ into three terms $\sum_{p=k,q}+\sum_{p\neq k,q=k}+\sum_{p\neq k,q\neq k}$ denoted by
\begin{equation}\label{eq:DnSec3terms}
\ddot{D}_n(f) = \ddot{D}_{n,1}(f) +\ddot{D}_{n,2}(f) +\ddot{D}_{n,3}(f) \; .
\end{equation}
Using that $\sum_k|c_k(s_{\star})|\,|k|^3$ and $\sum_p|c_p(s_{\star})|$ are finite, and that
$\ddot{\varphi}_{n,X}\overline{\varphi_{n,X}}+\dot{\varphi}_{n,X}\overline{\dot{\varphi}_{n,X}}+\varphi_{n,X}\overline{\ddot{\varphi}_{n,X}}$
is Lipschitz  with Lipschitz constant at most $n^{-1}\sum_{j=1}^nX_j^3+n^{-2}\sum_{j=1}^nX_j\sum_{j=1}^nX_j^2=O_p(n^3)$,
one easily gets that
\begin{equation}\label{eq:DnSecFirst2}
\sup_{f:|f-f_{0}|\leq \rho_n/n}
\left|\ddot{D}_{n,1}(f) +\ddot{D}_{n,2}(f) - \ddot{D}_{n,1}(f_{0})  - \ddot{D}_{n,2}(f_{0}) \right| = O_p(\rho_nn^2) = o_p(n^2) \; .
\end{equation}
Let $(f_l)_{1\leq l\leq L_n}$ be a regular grid with mesh $\delta_n$ covering $[f_{0}-\rho_n/n,
f_{0}+\rho_n/n]$. Then,
\begin{multline}\label{DnSecDecomp}
\sup_{f:|f-f_{0}|\leq \rho_n/n} \left|\ddot{D}_{n,3}(f)-\ddot{D}_{n,3}(f_{0})\right|\\
\leq \sup_{l=1,\dots,L_n} \left|\ddot{D}_{n,3}(f_l)-\ddot{D}_{n,3}(f_{0})\right|
+\sup_{l=1,\dots,L_n} \sup_{f\in [f_l,f_{l+1}]} \left|\ddot{D}_{n,3}(f)-\ddot{D}_{n,3}(f_l)\right|.
\end{multline}
Using the same argument as above with $\sum_p|c_p(s_{\star})|<\infty$ and $\sum_{k=1}^{K_n}k^3=O(K_n^4)$, we get
that
$
\sup_{l=1,\dots,L_n} \sup_{f\in [f_l,f_{l+1}]} \left|\ddot{D}_{n,3}(f)-\ddot{D}_{n,3}(f_l)\right| = O_p\left(K_n^4\delta_nn^3\right) \; .
$
Since $K_n=o(n^{-1})$, there exists $N$ such that, for any $n\geq N$, any $f$ such that $|f-f_{0}|\leq1/n$ and any
$p\in\zset$ and $k=1,\dots,K_n$ such that $p\neq k$, we have $|pf_{0}-kf|\geq f_{0}/2$. Then proceeding as for
bounding $G_n$ above, we have, for any $n\geq N$ and any $f$ such that $|f-f_{0}|\leq1/n$,
$
\PE\left[\left|\ddot{D}_{n,3}(f)\right|\right] \leq C\,K_n^3\,n \; ,
$
where $C$ is some positive constant. From this, we obtain
$
\sup_{l=1,\dots,L_n} \left|\ddot{D}_{n,3}(f_l)-\ddot{D}_{n,3}(f_{0})\right| = O_p(L_nK_n^3\,n) \; ,
$
so that, for $\delta_n=n^{-3/2}$, implying $L_n=[\rho_n/(n\delta_n)]=o(n^{1/2})$,~(\ref{DnSecDecomp}) finally yields
$
\sup_{f:|f-f_{0}|\leq \rho_n/n} \left|\ddot{D}_{n,3}(f)-\ddot{D}_{n,3}(f_{0})\right| = O_p\left(K_n^4n^{3/2}\right),
$
which, with~(\ref{eq:DnSecFirst2}) and~(\ref{eq:DnSec3terms}), gives~(\ref{eq:lambsec2}).

\bibliographystyle{ims}
\bibliography{unequal}

\end{document}